\newcommand{\nz}{\mathrm{nz}}
\newcommand{\RR}{\mathbb{R}}
\newcommand{\CH}{\mathcal{H}}
\newcommand{\eps}{\epsilon}
\newcommand{\purity}{\mathrm{purity}}
\newcommand{\entropy}{\mathrm{entropy}}
\def\RR{\mathbb{R}}
\newcommand{\eq}[1]{\begin{equation}\label{#1}}
\newcommand{\en}{\end{equation}}
\newcommand{\eqs}{\begin{equation*}}
\newcommand{\ens}{\end{equation*}}
\newenvironment{algor}{\mbox{ } \vspace{10pt} \hrule \mbox{ } \newline 
\noindent \vspace{3pt} \sl }{ 
\mbox{ } \vspace{2pt} \hrule \mbox{ } \vspace{10pt} \mbox{ } }
\def\betab{
\begin{tabbing}
xxx\=xxx\=xxxx\=xxxx\=xxxxxxxxx\=xxxxxxxxx\=xxx\=\kill}
\def\entab{\end{tabbing}}
\def\ip{\mbox{ip}} 
\def\argmax{\mbox{argmax}}
\newtheorem{lemma}{Lemma}
\newtheorem{theorem}{Theorem}
\newenvironment{proof}{\begin{trivlist}
		       \item[]\hspace{0.3cm}{\bf Proof.}
		       \hspace{0cm} }{\hfill $\Box$
		       \end{trivlist}}
\title{Sampling and multilevel coarsening algorithms for fast matrix 
approximations\thanks{This work was supported by NSF under  grant
       NSF/CCF-1318597.}} 
\author{Shashanka Ubaru\thanks{IBM T. J. Watson Research Center, Yorktown Heights, NY10598.
              This work was undertaken when the author was a student at University of Minnesota. 
              Email: {Shashanka.Ubaru@ibm.com}
}
\and Yousef Saad\thanks{Department of Computer Science and Engineering,
                University of Minnesota at Twin Cities, MN 55455.
                Email: saad@umn.edu.}}
\date{} 
\begin{document} 

\maketitle

 \begin{abstract}

  This paper addresses matrix approximation problems for matrices that
  are large, sparse  and/or that are representations  of large graphs.
  To  tackle these problems,  we consider  algorithms  that are  based
  primarily on  coarsening techniques,  possibly combined  with random
  sampling.   A  multilevel  coarsening  technique  is  proposed which
  utilizes  a  hypergraph  associated with the data  matrix  and  a  graph
  coarsening strategy   based on  column matching.   We  consider  a number  of
  standard applications of  this technique as well as a  few new ones.
  Among  standard applications  we first  consider the  problem of
  computing the \emph{partial SVD} for which a combination of sampling
  and coarsening yields significantly improved SVD results relative to
  sampling alone.  We also consider the \emph{Column Subset Selection}
  problem,  a  popular low  rank  approximation  method used  in  
  data-related applications, and show how multilevel coarsening can
  be adapted for  this problem. Similarly, we consider  the problem of
  \emph{graph sparsification}  and show how coarsening  techniques can
  be  employed  to solve  it. We also establish theoretical
  results  that  characterize  the approximation error 
  obtained and 
  the quality  of  the
  dimension reduction  achieved by  a coarsening  step, when  a proper
  column  matching strategy  is  employed.   
  Numerical  experiments illustrate  the
  performances of the methods in a few applications.
  
 \end{abstract}

{\bf Keywords:}
Singular values, SVD, randomization, subspace iteration, 
coarsening, multilevel methods.



\section{Introduction} 
Modern applications involving data often  rely on very large datasets,
but  in  most  situations  the  relevant information  lies  in  a  low
dimensional  subspace.   In  many  of  these  applications,  the  data
matrices are  sparse and/or are  representations of large  graphs.  In
recent  years, there  has been  a surge  of interest  in approximating
large matrices  in a variety  of different ways,  such as by  low rank
approximations~\cite{drineas2006fast,halko2011finding,mahoney2011randomized},
graph  sparsification~\cite{spielman2011graph,kapralov2014single}, and
compression~\cite{kumar2012sampling}.   Methods  to  obtain  low  rank
approximations  include  the   partial  singular  value  decomposition
(SVD)~\cite{halko2011finding}  and   Column  Subset   Selection  (CSS)
\cite{boutsidis2009improved}.  Efficient  methods have  been developed
to  compute   the  partial   SVD~\cite{Saad-book3,golub2012matrix},  a
problem that has been studied for a few decades.  However, traditional
methods for partial SVD computations  cannot cope with very large data
matrices.  Such  datasets prohibit even  the use of  rather ubiquitous
methods    such    as    the    Lanczos    or    subspace    iteration
algorithms~\cite{Saad-book3,saad2016analysis}, since  these algorithms
require  consecutive  accesses to  the  whole  matrix multiple  times.
Computing such matrix  approximations is even harder  in the scenarios
where the matrix under consideration  receives frequent updates in the
form of new columns or rows.

Much recent attention has been devoted to a class of `randomization' 
techniques~\cite{drineas2004clustering,drineas2006fast,halko2011finding}
whereby an approximate partial SVD is obtained from 
a small randomly sampled subset of the matrix, or possibly a few subsets.
Several randomized embedding and sketching methods have also been
proposed~\cite{halko2011finding,woodruff2014sketching}.
These randomization techniques are  well-established (theoretically)
and are proven to give good
results in some situations, see~\cite{mahoney2011randomized}
for a review. 
In this paper we will consider random sampling
methods as a first step  to down sample very large datasets when
necessary. However, randomized methods by themselves 
 can be suboptimal in many situations since they do not exploit 
 available information or the redundancies  in the matrix.
For example, many sampling methods only consider column norms, 
and embedding and sketching methods
 are usually independent of the input matrix.
For this reason, such scheme are  often 
termed  ``data-oblivious''~\cite{ailon2010faster}. 
One of the goals of this work is to  show that  multilevel graph coarsening,
a technique that is often used in the different context of graph 
partitioning~\cite{hendrickson1995multilevel},  can provide superior 
alternatives to randomized sampling, at a moderate cost.

Coarsening  a  graph  (or  a hypergraph)  $G=(V,E)$  means  finding  a
`coarse'   approximation  $\bar{G}=(\bar{V},\bar{E})$   to  $G$   with
$|\bar{V}| < |V|$,  which is a reduced representation  of the original
graph $G$, that retains as much of the structure of the original graph
as  possible.  Multilevel  coarsening   refers  to  the  technique  of
recursively coarsening  the original graph  to obtain a  succession of
smaller  graphs  that approximate  the  original  graph $G$. The
problem of graph and hypergraph coarsening has been extensively studied
  in   the   literature, see, e.g.,~\cite{hendrickson1995multilevel,
karypis1998fast,ca:hyppartition99,kk:kwaypart00}.


{These techniques are more accurate than down-sampling with column
norm   probabilities~\cite{drineas2006fast}, and yield comparable results to the popular leverage  scores based sampling~\cite{drineas2008relative}
which  can be  expensive, but more accurate than  column norm  based sampling. 
Moreover,  coarsening will be inexpensive compared to these sampling methods for large (size $n>10^5$) 
and sparse ($\nz(A)=O(n)$) matrices. For really large ($n>10^6$) matrices, a  typical algorithm  would first  perform uniform down-sampling  of the matrix to  reduce the  size  of  the  problem  and then  utilize  a
multilevel coarsening  technique for computing an  approximate partial
SVD of the reduced matrix.}

The second low rank approximation  problem considered in this paper is
the       \emph{column         subset         selection
  problem}~\cite{boutsidis2009improved,wang2015empirical}   (CSSP)  or
CUR   decomposition~\cite{mahoney2009cur,drineas2008relative}.   Here,
the goal is to select a  subset of columns that best represent the
given matrix spectrally, i.e., with respect to spectral and Frobenius norms.  Popular
methods   for the  CSSP   use the  leverage  score   sampling  measure for
sampling/selecting  the   columns.   Computing  the   leverage  scores
requires  a partial  SVD  of the  matrix and  this  may be  expensive,
particularly for large  matrices and when the (numerical)  rank is not
small.  We will see  how graph coarsening techniques can
be adapted for column subset selection. The coarsening approach
is an  inexpensive alternative for  this problem and it performs  well in
many situations as the experiments will show.

The   third    problem   we   consider   is    that   of   \emph{graph
  sparsification}~\cite{Faloutsos05,spielman2011graph,kapralov2014single}.   Here,
given  a  large (possibly  dense)  graph  $G$,  we  wish to  obtain  a
sparsified graph  $\tilde{G}$ that has significantly  fewer edges than
$G$ but  still maintains important  properties of the  original graph.
Graph sparsification allows one to  operate on large (dense) graphs $G$
with a reduced  space  and  time complexity.   In  particular,  we  are
interested in spectral sparsifiers,  where the Laplacian of $\tilde{G}$
spectrally         approximates        the         Laplacian        of
$G$~\cite{spielman2011spectral,kapralov2014single,woodruff2014sketching}. That
is, the  spectral norm  of the  Laplacian of  the sparsified  graph is
close to the  spectral norm of the Laplacian of  $G$, within a certain
additive or multiplicative  factor.   Such  spectral   sparsifiers  can help
approximately solve  linear systems with   the Laplacian of $G$  and to
approximate effective  resistances, spectral clusterings,  random walk
properties, and  a variety of  other computations.  We will again  show how
  graph  coarsening  can  be  adapted for the task of  graph
sparsification.



The  outline of  this  paper is  as follows.   Section~\ref{sec:basic}  describes   a few   existing  
algorithms to compute (low-rank)  approximations of matrices. 
Graph  coarsening  and  multilevel   algorithms  are
presented  in section~\ref{sec:coarsen}.   In  particular,  we present  a
hypergraph  coarsening technique  based on  column matching and 
discuss  methods  to improve  the  SVD  obtained from  randomization and
coarsening  methods.  In  section~\ref{sec:analysis},  we establish  
theoretical error bounds for the  coarsening method. 
Section~\ref{sec:appl}
discusses  a  few  data  related  applications  of  graph  coarsening.
Numerical experiments  illustrating the performances of  these methods
in a variety of applications are presented in section~\ref{sec:expt}.

\section{Background}\label{sec:basic} 
In this section,  we review three well-known classes of  methods to 
calculate the partial  SVD of matrices. The first class  is based on
randomized  sampling. We  also  consider solving the   column subset  selection
(CSSP)  and   graph  sparsification problems  using  randomized   sampling,  in
particular  based on leverage  score  sampling.   The second class  is  the set of methods
based on subspace  iteration,  and the  third   is  the set  of
SVD-updating    algorithms~\cite{zha1999updating,vecharynski2014fast}.
We consider the latter two classes  of methods as tools to improve the
results obtained  by sampling  and coarsening  methods. Hence,  we are
particularly interested  in the situation  where the matrix  $A$ under
consideration receives  updates in  the form of  new columns.  In fact
when combined with the multilevel algorithms to be  discuss in
section~\ref{sec:coarsen}, these updates are  not small since the number
of columns can double.

\subsection{Random sampling} 
Randomized algorithms have attracted much attention in recent years due to their
broad applications as well as  to the related theoretical results that
have been shown, which are independent of the matrix spectrum. Several
`randomized embedding' and `sketching'  methods have been proposed for
low    rank   approximation    and   for    computing   the    partial
SVD~\cite{frieze2004fast,martinsson2006randomized,liberty2007randomized,halko2011finding,ubaru2017low}.
Drineas et  al.~\cite{drineas2004clustering,drineas2006fast} presented
the randomized subsampling algorithms, where a submatrix consisting of
a few columns  of the original matrix is randomly  selected based on a
certain probability  distribution.  Their  method samples  the columns
based on  column norms. Specifically, column $i$ of 
a given  matrix $A\in\RR^{m\times  n}$ is selected with 
probability $p_i$ given by
\[
 p_i=\frac{\beta\|A^{(i)}\|_2^2}{\|A\|_F^2},
\]
where $\beta<1$ is a positive constant
and $A^{(i)}$ is the $i$-th column of $A$.
Using the above distribution, $c$ columns are selected and the 
subsampled matrix $C$ is formed by scaling the columns by $1/\sqrt{cp_i}$.
The SVD of $C$ is then computed.
The approximations obtained by this randomization method will
yield reasonable results only when there is a  sharp
decay in the singular value spectrum, and computing all the column norms and then sampling can become
 expensive for really large matrices.

\subsection{Column Subset Selection}
Another well-known dimensionality reduction method  considered in 
this paper is the column subset selection 
(CSSP)~\cite{boutsidis2009improved} technique.
 If  a subset of the rows is also selected, then the method leads to the
 CUR decomposition~\cite{mahoney2009cur}. These methods can be viewed as 
extensions of the  randomized sampling based algorithms.
Let  $A\in\RR^{m\times n}$  be a large data matrix whose columns 
we wish to select and  suppose $V_k$ is a matrix whose columns are 
the  top $k$ right singular vectors of $A$. 
Then, the leverage score of the $i$-th column of $A$ is given by
\[
 \ell_i=\frac{1}{k}\|V_k(i,:)\|_2^2,
\]
the  scaled square  norm  of  the $i$-th  row  of  $V_k$.  In
leverage score  sampling, the  columns of $A$  are then sampled  using the
probability  distribution  $p_i=\min\{1,\ell_i\}$.  The  most  popular
methods  for CSSP  involve  the use  of this  leverage  scores as  the
probability            distribution             for            columns
selection~\cite{drineas2008relative,boutsidis2009improved,mahoney2009cur,boutsidis2017optimal}.
Greedy subset  selection algorithms have  also been proposed  based on
the        right         singular        vectors         of        the
matrix~\cite{paschou2007intra,avron2013faster}.    Note that these
methods  may be  expensive when the rank $k$ is not small
 since they require  the top  $k$
singular vectors.   
An alternative solution to the CSSP 
is to use the coarsened graph consisting of 
the columns obtained by graph coarsening.
We will see how this technique compares with standard ones.

 \subsection{Graph Sparsification}
 Given a  large graph $G=(V,E)$ with  $n$ vertices, we wish  to find a
 sparse approximation to this graph that preserves certain information
 of     the     original     graph    such     as     the     spectral
 information~\cite{spielman2011spectral,kapralov2014single},
 structures       like       clusters        within       in       the
 graph~\cite{Faloutsos05,Leskovec06},  etc.  Sparsifying  large graphs
 has  several  computational  advantages  and  has  hence  found  many
 applications~\cite{Faloutsos05,Leskovec06,Satuluri11,spielman2011graph,spielman2011spectral}.

 Let  $B\in\RR^{{n\choose 2}\times  n}$ be  the vertex  edge incidence
 matrix  of the  graph $G$,  where  the $e$-th row  $b_e$ of  $B$ for  edge
 $e=(u,v)$ of  the graph has a  value $\sqrt{w_e}$ in columns  $u$ and
 $v$, and zero  elsewhere, in which $w_e$ is the weight  of the edge. The
 corresponding Laplacian of the graph  is then given by $K=B^TB$.  The
 spectral  sparsification   problem  involves  computing   a  weighted
 subgraph $\tilde G$  of $G$ such that if $\tilde  K$ is the Laplacian
 of  $\tilde G$,  then $x^T\tilde{K}x$  is  close to  $x^TKx$ for  any
 $x\in\RR^n$.  A number of methods  have been proposed for sparsifying
 graphs,                                                           see
 e.g.,~\cite{spielman2011graph,spielman2011spectral,kapralov2014single,woodruff2014sketching}.
 One such approach performs row sampling of the matrix $B$ using
 the  leverage score  sampling criterion~\cite{kapralov2014single}.  
 Considering
 the SVD  of $B=U\Sigma V^T$, the  leverage scores $\ell_i$ for  a row
 $b_i$ of $B$  can be computed as  $\ell_i=\|u_i\|_2^2\leq1$ using the
 rows  of  $U$.  This  leverage  score  is  related to  the  effective
 resistance  of edge  $i$~\cite{spielman2011graph}.   By sampling  the
 rows of  $B$ according  to their  leverage scores  it is  possible to
 obtain      a       matrix      $\tilde      B$,       such      that
 $\tilde{K}=\tilde{B}^T\tilde{B}$  and  $x^T\tilde{K}x$  is  close  to
 $x^TKx$ for  any $x\in\RR^n$.  In section~\ref{sec:coarsen},  we show
 how the rows of $B$ can we selected via coarsening.

\subsection{Subspace iteration} 

\begin{algorithm}[t!]
\caption{Incremental Subspace Iteration\label{alg:subsit}  }
\begin{algorithmic}
  \State {\bf Input:}  $U_s$, from  $A_s = U_s \Sigma_s V_s^T$,
   coarsened matrix of $A$, $A_t$ (larger coarsened matrix), 
  and no. of iterations 
  $iter$.  
  \State {\bf Output:} Updated SVD $[U,\Sigma,V]$ of $A$.
\State Start: $U = U_s$
\For{$i=1: iter$} 
\State $ V = A_t^T U $
\State $ U = A_t V $
\State $U := qr(U,0); \quad V := qr(V,0)$;
\State $S = U^T A_t V$ 
 
\State $[R_U, \Sigma, R_V]                                          
= \text{svd}( S )$; $U := UR_U$; $V := VR_V$ 
\State\Comment{Above two steps used only if we 
require current estimate of singular vectors and values}
\EndFor 
\end{algorithmic}
\end{algorithm}

Subspace iteration is a well-established  method used for solving eigenvalue  and
singular value problems~\cite{golub2012matrix,Saad-book3}. We review this algorithm as it will
be exploited later as  a tool 
to improve SVD results obtained by sampling and coarsening methods.
A known advantage  of the subspace iteration 
 algorithm  is that  it is very
robust and that it tolerates  changes in the matrix~\cite{saad2016analysis}.
This is important in our context.  Let us consider 
a general matrix $A\in\RR^{m\times n}$, not necessarily associated with a  graph. 
The subspace iteration algorithm  can easily be
adapted to the situation where a previous SVD is available for
a smaller version of $A$ with fewer rows or columns, obtained by subsampling or coarsening for 
example. 
Indeed, let $A_s$ be a column-sampled
version  of $A$. In matlab notation we represent this as 
$A_s = A(:,J_s)$ where $J_s$ is a subset of the column index $[1 : n]$.
Let $A_t$ be another subsample of $A$, where we assume that $J_s \subset J_t$.
Then if $A_s = U_s \Sigma_s V_s^T$, we can perform a few steps of subspace 
iteration updates as shown in Algorithm~\ref{alg:subsit}.
Note that the last two steps in the algorithm which diagonalize $S$ are needed
only if we require the current 
estimate of singular vectors and values.



\subsection{SVD updates from subspaces}\label{sec:SVDupdate} 
A well known algorithm for updating the SVD is the 
`updating algorithm' of Zha and Simon~\cite{zha1999updating}. 
Given a matrix $A\in\RR^{m\times n}$ and its partial SVD $[U_k,\Sigma_k,V_k]$,
the matrix $A$ is updated by adding
columns $D$ to it, resulting in  a new matrix $A_D = [A, D]$,
where $D\in\RR^{m\times p}$ represents the added columns.
The algorithm then first computes 
\begin{equation}\label{eqn:qr_doc} 
(I - U_k U_k^T) D = \hat U_p R,  
\end{equation} 
 the truncated (thin) QR decomposition of $(I - U_k U_k^T) D$, where 
$\hat U_p \in \mathbb{R}^{m \times p}$ has orthonormal columns and $R \in \mathbb{R}^{p \times p}$ is  
upper triangular.  
Given~(\ref{eqn:qr_doc}), one can observe that   
\begin{equation}\label{eqn:relation_docs} 
A_D = [U_k, \ \hat U_p] 
H_D 
\left[ 
\begin{array}{cc} 
 V_k & 0 \\  
 0   & I_p  
\end{array} 
\right]^T \;, \ 
H_D = \left[ 
\begin{array}{cc} 
 \Sigma_k & U_k^T D \\  
 0        & R  
\end{array} 
\right] \;, 
\end{equation} 
where $I_p$ denotes the $p$-by-$p$ identity matrix.  
Thus, if $\Theta_k$, $F_k$, and $G_k$ are the matrices corresponding to the $k$ dominant singular values of  
$H_D \in \mathbb{R}^{(k+p) \times (k+p)}$ 
and their left and right singular vectors, respectively,  
then the desired updates $\tilde \Sigma_k$, $\tilde U_k$, and $\tilde V_k$ are given by 
\begin{equation}\label{eqn:upd_docs} 
\tilde \Sigma_k = \Theta_k, \ \tilde U_k = [U_k, \ \hat U_p] F_k, \ \mbox{and} \ \tilde V_k =  
\left[ 
\begin{array}{cc} 
 V_k & 0 \\  
 0   & I_p  
\end{array} 
\right] G_k \;.  
\end{equation}    

The QR decomposition  in the first step  eq.~\eqref{eqn:qr_doc} can be
expensive when  the updates are large  so an improved version  of this
algorithm  was   proposed  in~\cite{vecharynski2014fast}   where  this
factorization  is replaced  by a  low rank  approximation of  the same
matrix.  That is, for a rank $l$, we compute a rank-$l$ approximation,
$ (I -  U_k U_k^T) D =  X_lS_lY_l^T.  $ Then, the matrix  $H_D$ is the
update equation~\eqref{eqn:upd_docs} will be
\[
 H_D=\begin{bmatrix}
     \Sigma_k & U_k^T D \\  
 0        & S_lY_l^T
    \end{bmatrix}
\]
with $\bar{U}=[U_k,X_l]$. The idea is  that the update $D$ will likely
be low rank outside the previous  top $k$ singular vector space. Hence
a  low rank  approximation of  $  (I -  U_k U_k^T)  D$ suffices,  thus
reducing the cost.

In  applications  involving  low rank approximations, the rank $k$ 
will be typically much smaller than $n$, and it can be
 inexpensively estimated using  recently proposed
methods~\cite{ubaru2016fast,ubaru2017fast}.

\section{Coarsening}\label{sec:coarsen}

\begin{figure}[tb]
\centerline{\includegraphics[width=0.55\textwidth]{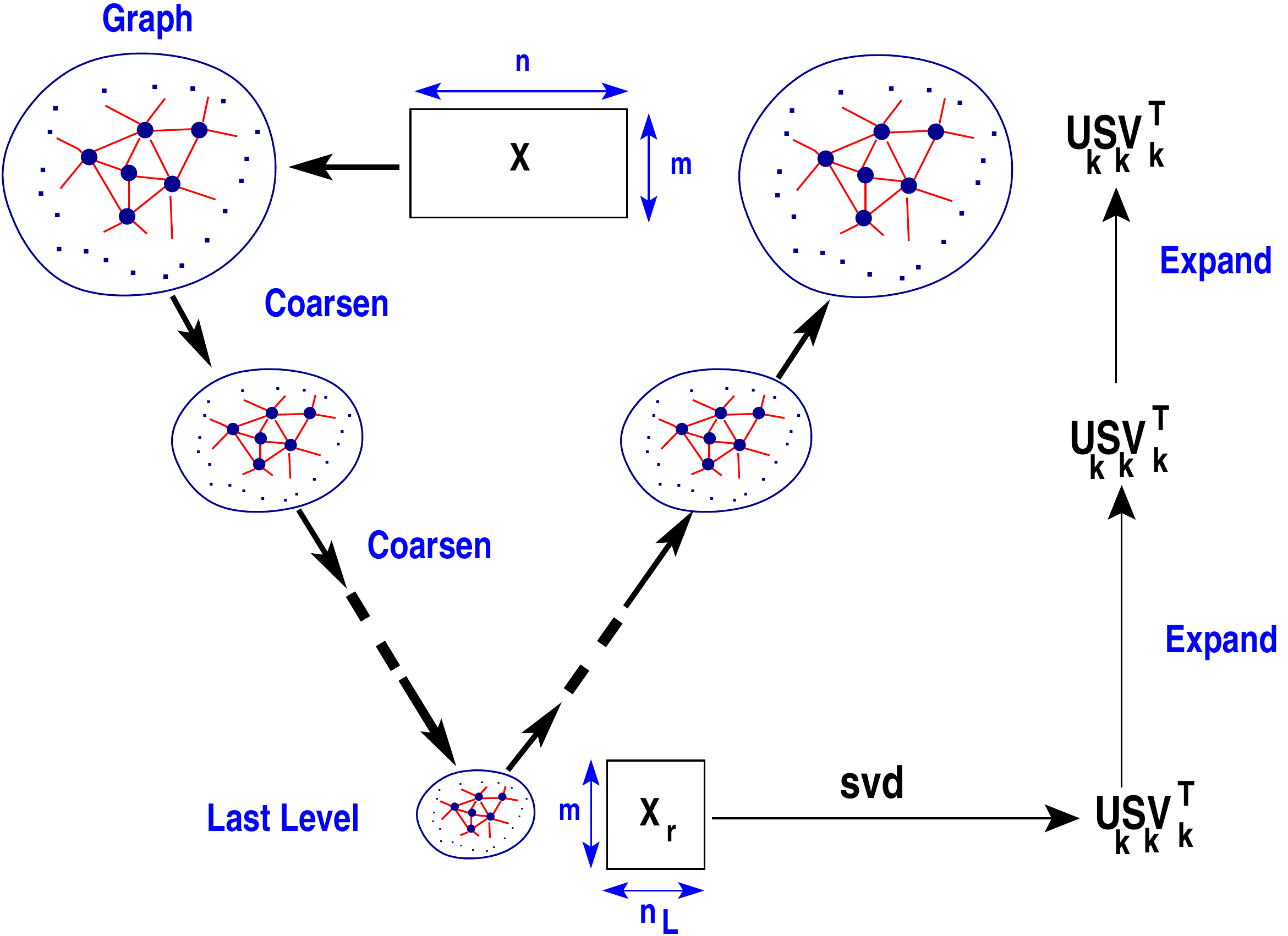} 
\includegraphics[width=0.4\textwidth]{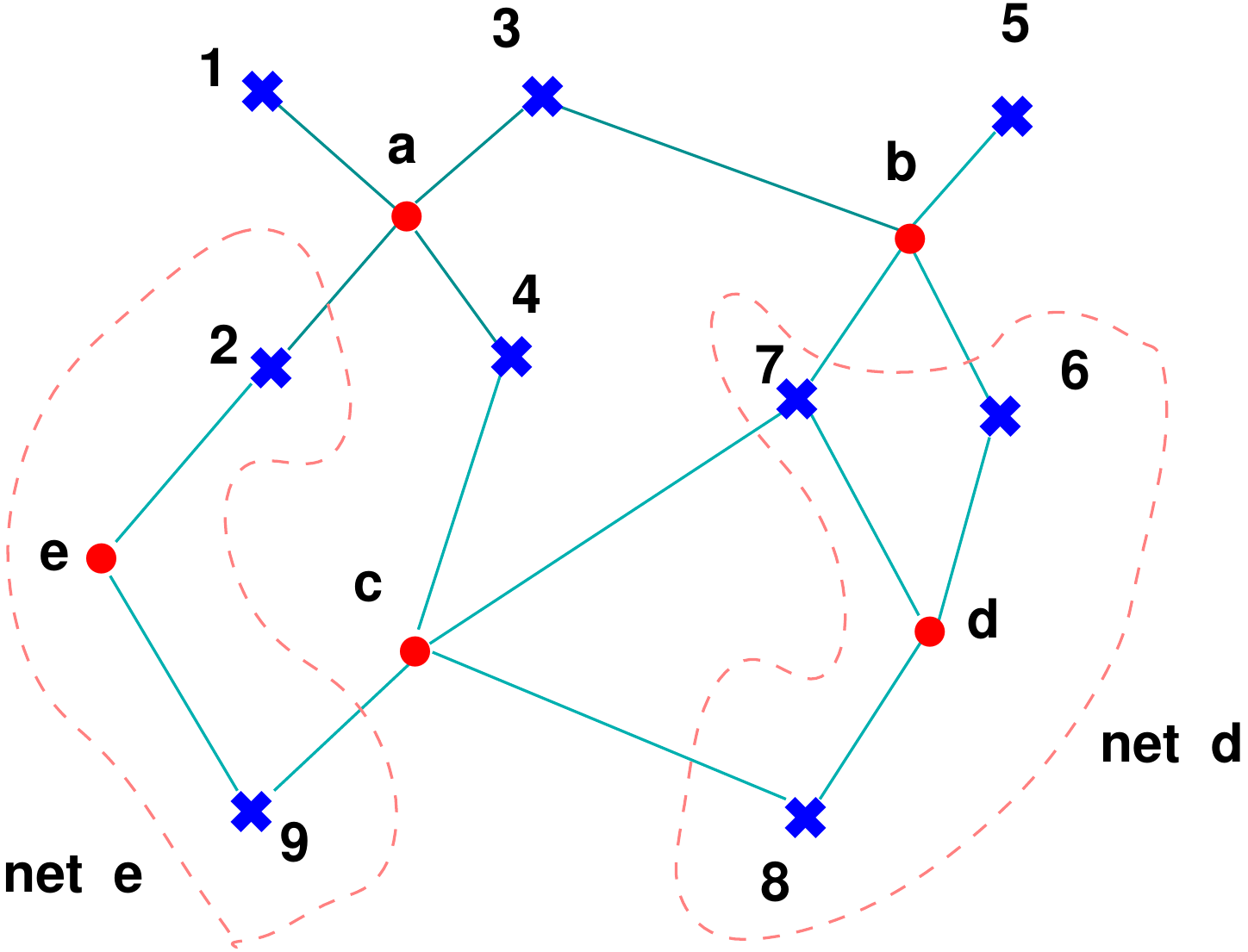}}
\caption{Left: Coarsening / uncoarsening procedure; Right : A sample hypergraph.}\label{fig:coarsen} 
\end{figure} 

As discussed in the previous section,
randomized sampling methods can be an effective approach
for  computing the partial SVD 
in certain  situations, for example, when  there is a good  gap in the
spectrum or if  there is a sharp  spectral decay. An alternative  approach to
reduce  the  matrix  dimension,  particularly when  the  matrices  are
associated with graphs,  is to coarsen the data with the help of
 graph coarsening,
perform all computations  on the resulting 
reduced size matrix, and  then project back
to the  original space. Similar  to the  idea of sampling  columns and
computing the  SVD of  the smaller sampled  matrix, in  the coarsening
methods,  we compute  the SVD  from  the matrix  corresponding to  the
coarser data. It is also possible to then wind back up and correct the
SVD   gradually,  in   a  way   similar  to   V-cycle  techniques   in
multigrid~\cite{hss:mlevel-08},     this     is     illustrated     in
Figure~\ref{fig:coarsen}(left).                 See,               for
example~\cite{zhou2006learning,hss:mlevel-08,fang2010multilevel,HRFangEtAlcikm2}
for  a few  illustrations  where coarsening  is  used in  data-related
applications.

Before coarsening,  we first  need to build  a graph  representing the
data. This  first step  may be  expensive in some  cases but  for data
represented by sparse  matrices, the graph is available  from the data
itself in  the form  of a  standard graph or  a hypergraph.  For dense
data, we need to set-up  a similarity graph, see~\cite{cfsKnn08} for a
fast algorithm  to achieve  this. 
This paper will  focus on sparse data such as  the data sets available
in  text mining,  gene expressions  and multilabel  classification, to
mention a few  examples, and advocate the use of coarsening technique for dimension
reduction.  In such cases, the data  is represented by a
(rectangular)  sparse   matrix  and  it is most convenient  to  use
hypergraph models~\cite{zhou2006learning} for coarsening.

\subsection{Hypergraph Coarsening}
\label{sec:model}

\begin{algorithm}[tb!]
\caption{Hypergraph coarsening by column matching.\label{alg:coarsening}}
\begin{algorithmic}
  \State {\bf Input:}    $A\in\RR^{m\times n}$, $\eps\in(0,1)$. 
        
  \State {\bf Output:} Coarse matrix $C\in\RR^{m\times c}$.
  \State $Idx:=\{1,\dots,n\}$ \Comment{(Set of unmatched vertices)}
    \State Set $\ip[k]:= 0$ for $k=1,\dots,n$, and $\ell=1$. 
 \Comment{(Initialization)} 
 
  \Repeat
    \State Randomly pick $i\in Idx$; $Idx:= Idx-\{i\}$.
      \ForAll {$j$ with $a_{ij}\neq 0$} \Comment{(*)}
        \ForAll {$k$ with $a_{jk}\neq 0$}
        \State {$\ip[k]:= \ip[k]+a_{ij}a_{jk}$. } 
      \EndFor
    \EndFor
    \State {$j := \argmax\{\ip[k]:k\in Idx\}$}
    \State {$csq\theta = \frac{\ip[j]^2}{\|a^{(i)}\|^2\|a^{(j)}\|^2}$.}
   \If {[ $(csq\theta\geq\frac{1}{1+\eps^2})$]} \Comment{Match only if the angle satisfies the condition}
       \State{$c^{(\ell)}:= \sqrt{1+csq\theta}a^{(i)}$}. \Comment{The denser of columns $a^{(i)}$ and $a^{(j)}$}
      \State{$Idx:= Idx-\{j\}; \ell=\ell+1$.}
      \Else
       \State{$c^{(\ell)}:= a^{(i)}$}.
        \State{$ \ell=\ell+1$.}
   \EndIf
   \State{Reset nonzero values of ip to zero}\Comment{(A sparse operation)} 
    \Until {$Idx=\emptyset$}
\end{algorithmic}
\end{algorithm}
Hypergraphs extend the classical notion of graphs.
A hypergraph $H=(V,E)$
consists of a set of vertices $V$ and a set of
hyperedges $E$~\cite{ca:hyppartition99,zhou2006learning}.
In a standard graph an edge connects two
vertices, 
whereas a hyperedge may connect an arbitrary subset of vertices.
A hypergraph $H=(V,E)$ can be canonically represented by
a sparse matrix $A$,
where the vertices in $V$ and hyperedges (nets) in $E$ are represented
by the columns and rows of $A$, respectively.
This is called the {\em row-net model}.
Each hyperedge, a row of $A$, connects the vertices, i.e., the columns,
whose corresponding entries in that row are non-zero.
An  illustration is provided in Figure~\ref{fig:coarsen}(Right),
where $V=\{1,\dots,9\}$ and $E=\{a,\dots,e\}$ with
$a=\{1,2,3,4\}$, $b=\{3, 5, 6, 7\}$, $c=\{4, 7, 8, 9\}$, $d=\{6,7,8\}$,
and $e=\{2,9\}$. 

%

Given a  (sparse) data set of $n$ entries  in $\RR^m$ represented
by a matrix  $A \in \RR^{m\times n}$, we can  consider a corresponding
hypergraph $H=(V,E)$ with vertex set  $V$ representing the columns
of  $A$. 
Several  methods   exist  for   coarsening  hypergraphs,   see,  e.g.,
\cite{ca:hyppartition99,kk:kwaypart00}.     Here,   we    consider   a
hypergraph coarsening  based on column  matching, which is  a modified
version   of  the   {\em   maximum-weight   matching}  method,   e.g.,
\cite{ca:hyppartition99,dbhbc:hypergraph06}.   The  modified  approach
follows the  maximum-weight matching method and  computes the non-zero
inner  product $\langle  a^{(i)},a^{(j)}\rangle$ between  two vertices
$i$ and $j$, i.e.,  the $i$-th and $j$-th columns of  $A$. Note that the
inner  product between  vectors is  related to  the angle  between the
vectors,                         i.e.,                        $\langle
a^{(i)},a^{(j)}\rangle=\|a^{(i)}\|\|a^{(j)}\|\cos\theta_{ij}$.     The
proposed coarsening strategy is to  match two vertices, i.e., columns,
only   if  the   angle  between   the   vertices  is   such  that,   $
\tan\theta_{ij}\leq \eps$, for a constant $0<\eps<1$.  Another feature
of  the  proposed algorithm  is  that  it  applies  a scaling  to  the
coarsened  columns in  order  to  reduce the  error.   In summary,  we
combine two columns $a^{(i)}$ and  $a^{(j)}$ if the angle between them
is such that, $\tan\theta_{ij}\leq \eps$.   We replace the two columns
$a^{(i)}$ and $a^{(j)}$ by
\[c^{(\ell)}=\left(\sqrt{1+\cos^2\theta_{ij}} \right)a^{(i)}\]
where   $a^{(i)}$ is replaced by 
$a^{(j)}$ if the latter has   more  nonzeros.
This minor modification provides  some control over the coarsening procedure 
using the parameter $\eps$
and, more importantly, it helps establish  theoretical results for 
the method, see section~\ref{sec:analysis}.

The vertices can be visited in a random order,
or in the `natural' order in which they are  listed.
For each unmatched vertex $i$,
all the unmatched neighbor vertices $j$ are explored 
and the inner product between $i$ and each $j$ is computed.
This typically requires the data structures of $A$ and its transpose,
in that a fast access to rows and columns is required.
The vertex $j$ with the highest non-zero inner product
$\langle a^{(i)},a^{(j)}\rangle$ is considered and if the angle between them 
is such that  $\tan\theta_{ij}\leq \eps$ (or $\cos^2\theta_{ij}\geq \frac{1}{1+\eps^2})$), 
then $i$ is matched with $j$ and
the procedure is repeated until all vertices have been matched.
Algorithm~\ref{alg:coarsening} provides details on the procedure.

 Computing  the cosine of the angle between  column $i$ and
all other columns is equivalent to computing the $i$-th row of $A^TA$.
{In fact, we only need to compute the upper triangular  part of $A^TA$ since it is symmetric.}  
For  sparse matrices,  the inexpensive computation  of the  inner product  
between the columns used in the algorithm is achieved  by modifying
the  cosine algorithm  in~\cite{saad2003finding} developed  for matrix
blocks detection.
Thus, loop (*) computes all inner products of column $i$ with
all other columns, and accumulates these in the sparse `row' 
$\ip[.]$. This amounts in essence
to computing the $i$-th row of $A^T A$ as the combination of rows:
$\sum_{a_{ij}\ne 0} a_{ij} a_{j,:}$.
As indicated in the line just before the end, resetting $\ip[ . ]$ to zero
is a sparse operation that does not require zeroing out the whole vector 
but only those entries that are nonzero.

The pairing used by the algorithm relies only on the sparsity pattern.
It is clear  that these entries can  also be used to  obtain a pairing
based on  the cosine of the  angles between columns $i$  and $k$.  The
coarse column $c^{(p)}$ is defined as the `denser of columns $a^{(i)}$
and $a^{(j)}$'. In other models the  sum is sometimes used. Note that
the number of  vertices (columns) in $C$ will depend  on 
the redundancy among 
 the data and the $\epsilon$ value chosen, see further discussion in
section~\ref{sec:analysis}.

\paragraph{Computational  Cost}  
We saw earlier that the inner products of a given column $i$ with all 
other columns amounts to computing the nonzero values
of the $i$-th row of the upper triangular part of $A^T A$.  
If we call $Adj_A(i) $ the set of nonzero indices of the $i$-th column of $A$ 
then according to what was said above,
the cost of computing $\ip[.]$ by the algorithm is 
\[
 \sum_{j \ \in \ Adj_A(i), \ j>i } \ | Adj_{A^T} (j) |,
\]
where $|\cdot |$ is the cardinality of the set.
If $\nu_c$ (resp. $\nu_r$) 
is the maximum number of nonzeros in each column (resp. row),
then an upper bound for the above cost is $n \nu_r \nu_c$, which
is the same upper bound as that of  computing  the upper
triangular part  of $A^TA$.  Several simplifications  and improvements
can be added to  reduce the cost. First, we can  skip the columns that
are  already matched.  In this way,  fewer  inner
products are computed  as the  algorithm progresses.
In addition,
since  we only  need the  angle to  be such  that $\tan\theta_{ij}\leq
\eps$, we can reduce the computation cost significantly by stopping
as soon as we encounter a 
column with which the angle is smaller than the threshold. {Hence, this coarsening approach can be
 quite inexpensive for very large sparse matrices, see  section~\ref{sec:expt} for  numerical results.}
Article~\cite{chen2012dense} uses the angle based column matching idea for 
dense subgraph detection in graphs, and describes efficient methods to compute the inner products. 

\subsection{Multilevel SVD computations}


Given a sparse  matrix $A$,
we can use Algorithm~\ref{alg:coarsening} repeatedly with different (increasing) 
$\eps$ values,
to recursively coarsen
the corresponding hypergraph,
and obtain a sequence of sparse matrices $A_1,A_2,\dots,A_s$ with $A_0=A$,
where $A_i$ corresponds to the coarse graph $H_i$ of level $i$
for $i=1,\dots,s$, and $A_s$ represents the lowest level graph $H_s$.
This provides a reduced size matrix which will likely
be a good representation of the original data. Note that, recursive coarsening
will be inexpensive since the inner products required in the further levels
are already computed in the first level of coarsening.

 In the multilevel framework of hypergraph coarsening
 we apply the matrix approximation method, say {low rank approximation using the SVD}, to
 the coarsened data matrix $A_s\in \RR^{m\times n_s}$
 at the lowest level,
 where $n_s$ is the number of columns at the coarse level $s$ ($n_s<n$).
 {A low-rank matrix approximation can be viewed as
 a linear projection of the rows into a lower dimensional space.
 In other words, we have a projected matrix $\hat{A}_s\in \RR^{d\times n_s}$ ($d<m$) from the coarse matrix $A_s$
 which preserves certain features of $A_s$ (say spans the top $k$ singular vectors of $A_s$).
Suppose we apply the same linear projection to $A\in\RR^{m\times n}$ (the original matrix),
resulting in $\hat{A} \in \RR^{d\times n}$ ($d<m$), we can expect that
 $\hat{A}$ preserves certain features of $A$ (approximately spans the top $k$ singular vectors of $A$).
 Hence, we can obtain a reduced representation $\hat{A}\in \RR^{d\times n}$ ($d<m$) of
 the original data $A$ using this linear projection.}
   The procedure is illustrated in Figure~\ref{fig:coarsen} (left).
 A multilevel randomized sampling technique is discussed 
 in~\cite{drineas2006fast}. 
 Another strategy for reducing the matrix dimension is to mix the two 
techniques: 
 Coarsening may still be exceedingly 
expensive for some types of large data where there is no immediate graph available
to exploit for coarsening. In this case, a good strategy would be
to downsample first using the randomized methods (say uniform sampling), 
then construct a graph and coarsen it. In section~\ref{sec:expt}, we 
 compare the SVDs obtained  from pure randomization method (column norm sampling)
against those obtained from coarsening and also  a combination of
randomization (uniform sampling) and coarsening.

\subsection{CSSP and graph sparsification}
The multilevel coarsening technique just  presented can be applied for
the column  subset selection problem (CSSP)  as well as for  the graph
sparsification problem.  We can use  Algorithm~\ref{alg:coarsening} to
coarsen the  matrix, and this is equivalent to selecting columns  of the
matrix. The  only required modification in  the algorithm is  that the
columns selected are  no longer scaled.  
The coarse matrix  $C$ contains a subset  of the 
columns of the original matrix $A$ and the analysis will show that
 it is a faithful representation of the original $A$.
 
 For graph sparsification, we can apply the coarsening procedure on {the vertex edge incidence matrix $B$
 corresponding to the graph $G$. That is, we coarsen the rows of $B$ (by applying Algorithm~\ref{alg:coarsening} to $B^T$), and obtain a matrix $\tilde{B}$ with fewer rows,
 yielding us a graph $\tilde{G}$ with fewer edges.}
 {From the analysis in section~\ref{sec:analysis}, we can show that this coarsening strategy will indeed 
 result in a spectral sparsifier, i.e., we can show that  $x^T\tilde{B}^T\tilde{B}x$ will be close to $x^TB^TBx$ for any vector $x :\|x\|=1$.}
 Since we achieve sparsification via matching, the structures such as clusters
 within the original graph are also  preserved.

\subsection{Incremental SVD} 

Next, we explore combined algorithms with the goal of  improving the randomized
sampling and coarsening SVD results significantly. 
The typical overall algorithm which we call the Incremental SVD  
algorithm, will start with a sampled/coarsened matrix $A_s$ using 
one of the randomized methods or multilevel coarsening discussed above,
and compute its partial SVD.
We then add columns of $A$ to $A_s$, and use Algorithm~\ref{alg:subsit}
or the SVD-update algorithm to update the SVD results.
A version  of this  Incremental algorithm  has been  briefly discussed
in~\cite{gu2015subspace},  where  the  basic randomized  algorithm  is
combined with subspace iteration.

Roughly  speaking,  we  know  that  each  iteration  of  the  subspace
iteration  algorithm will  make the  computed subspace  closer to  the
subspace  spanned by  the  target singular  vectors.   If the  initial
subspace is close to the span  of the actual top $k$ singular vectors,
fewer  iterations  will  be  needed  to  get  accurate  results.   The
theoretical results established in the following section, give an idea
on how the top $k$ singular  vectors for $k<c$ of the coarsened matrix
relate to the  span of the top  $k$ singular vectors of  $A$.  In such
cases, a  few steps  of the  subspace iteration  will then  yield very
accurate results.

For the {SVD updating method discussed in section~\ref{sec:SVDupdate}},  it is  known that the  method performs
well  when  the  updates  are  of  low  rank  and  when  they  do  not
significantly affect  the dominant  subspace, the subspace  spanned by
the top $k$ singular vectors if interest~\cite{vecharynski2014fast}.
Since the random  sampling and the coarsening methods return
a  good approximation  to  the
dominant subspace,  we  can assume that the 
updates in  the incremental SVD are  of low rank, 
and  these updates likely effect the
dominant subspace  only slightly. This is the reason we expect 
that the {SVD updating method} will yield improved  results.

\section{Analysis}\label{sec:analysis}
In this section,  we establish initial theoretical results  for the coarsening
technique  based  on column  matching.   
In  the coarsening  strategy of Algorithm~\ref{alg:coarsening},
 we  combine two  columns $a^{(i)}$ and  $a^{(j)}$  if the
angle  between  them  is  such that  $\tan\theta_{i}\leq  \eps$.   That is, 
 we set 
$c^{(\ell)}=(\sqrt{1+\cos^2\theta_{i}})a^{(i)}$  (or $a^{(j)}$, if it has
more nonzeros) in place of the two columns in the coarsened matrix $C$.
We then have the following result which relates the Rayleigh quotients of 
the coarsened matrix $C$ to that of $A$ (indicates $AA^T\approx CC^T$, spectrally).

\begin{lemma}\label{lemm:RQ}
 Given $A\in\RR^{m\times n}$, let $C\in\RR^{m\times c}$ be the coarsened matrix of $A$ 
obtained by one level of coarsening of $A$ using Algorithm~\ref{alg:coarsening}
with columns $a^{(i)}$ and $a^{(j)}$  matched if $\tan\theta_{i}\leq \eps$,
for $0<\eps<1$. Then,
\begin{equation}
 |x^TAA^Tx-x^TCC^Tx|\leq  3\eps \|A\|_F^2,
\end{equation}
for any $x\in\RR^m$ with $\|x\|_2=1$.
\end{lemma}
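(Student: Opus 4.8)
The plan is to expand both Rayleigh quotients as sums of rank-one terms over the columns and to track the difference contribution pair by pair. Writing $AA^T=\sum_{k=1}^n a^{(k)}(a^{(k)})^T$ and $CC^T=\sum_{\ell=1}^c c^{(\ell)}(c^{(\ell)})^T$, for a unit vector $x$ we have $x^TAA^Tx=\sum_k\langle x,a^{(k)}\rangle^2$ and similarly for $C$. The coarsening of Algorithm~\ref{alg:coarsening} partitions the column index set into unmatched singletons and matched pairs. An unmatched column is copied verbatim into $C$, so its rank-one term appears identically in both sums and cancels in the difference. Hence only the matched pairs contribute, and the whole problem reduces to a per-pair estimate summed over disjoint pairs.

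For a matched pair, let $u=a^{(i)}$ be the retained (denser) column, $v=a^{(j)}$ the discarded one, and $\theta$ the angle between them with $\tan\theta\le\eps$; the pair is replaced by $c=\sqrt{1+\cos^2\theta}\,u$. The difference contributed by this pair is
\begin{equation*}
D:=\langle x,u\rangle^2+\langle x,v\rangle^2-(1+\cos^2\theta)\langle x,u\rangle^2=\langle x,v\rangle^2-\cos^2\theta\,\langle x,u\rangle^2 .
\end{equation*}
I would control $D$ by decomposing $v$ along $u$ and orthogonally to it: $v=\cos\theta\,\tfrac{\|v\|}{\|u\|}\,u+v_\perp$ with $v_\perp\perp u$ and $\|v_\perp\|=\|v\|\sin\theta$. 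Since $\eps<1$ forces $\theta<\pi/4$, one has $\sin\theta\le\tan\theta\le\eps$ and $\sin^2\theta\le\eps^2\le\eps$, so $v_\perp$ is genuinely small. Substituting and using $|\langle x,u\rangle|\le\|u\|$, $|\langle x,v_\perp\rangle|\le\eps\|v\|$, and Cauchy--Schwarz on the cross term, each pair's contribution is bounded by a constant multiple of $\eps$ times the pair's Frobenius mass $\|u\|^2+\|v\|^2$.

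Finally, because the matched pairs are disjoint, summing the per-pair bounds gives $\sum_{\text{pairs}}(\|u\|^2+\|v\|^2)\le\sum_k\|a^{(k)}\|^2=\|A\|_F^2$, and tracking the constants (roughly $2\eps$ from the cross term and $\eps^2\le\eps$ from the $\|v_\perp\|^2$ term) yields the claimed factor $3\eps$.

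I expect the main obstacle to be the per-pair estimate for $D$, and specifically the term $\cos^2\theta\big(\tfrac{\|v\|^2}{\|u\|^2}-1\big)\langle x,u\rangle^2$ appearing in the expansion: it vanishes when the two matched columns have equal norm but is not $O(\eps)$ when their norms differ widely, so the clean bound hinges on the matched columns having comparable magnitudes (e.g.\ normalized columns, as is natural in the data settings targeted here). Getting the cross term and the residual term to combine into exactly the constant $3$, rather than leaving a $\theta$-dependent factor, is the remaining bookkeeping; the smallness of $\|v_\perp\|$, i.e.\ the bound $\sin\theta\le\eps$ coming directly from the matching criterion, is what drives the whole estimate.
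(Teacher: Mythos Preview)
Your approach is exactly the paper's: expand both Rayleigh quotients over columns, observe that unmatched columns cancel, decompose the discarded column of each matched pair into its component along the retained column plus an orthogonal remainder, and bound the cross term by $2\eps\|a^{(i)}\|^2$ and the remainder term by $\eps^2\|a^{(i)}\|^2\le\eps\|a^{(i)}\|^2$, then sum over $I$.

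The obstacle you flag is real, and the paper's proof has the same gap. The paper writes the decomposition as $a^{(j)}=\cos\theta_i\,a^{(i)}+\sin\theta_i\,\|a^{(i)}\|\bar w$ with $\bar w\perp a^{(i)}$ a unit vector; taking the inner product with $a^{(i)}$ forces $\|a^{(j)}\|=\|a^{(i)}\|$, so the argument as written implicitly assumes the matched columns have equal norm. Your term $\cos^2\theta\bigl(\|v\|^2/\|u\|^2-1\bigr)\langle x,u\rangle^2$ is precisely what is being silently dropped. In other words, your proposal is not missing anything the paper supplies; both arguments go through cleanly only under a normalized-column (or near-equal-norm) hypothesis.
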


\begin{proof} \ 
 Let $(i,j)$ be a pair of matched column indices with $i$ being
  the index of the column that is retained after scaling. We denote by
  $I$ the set of all indices of  the retained columns and $J$ the
  set of the remaining columns.

 We know that
 $x^TAA^Tx=\|A^Tx\|_2^2=\sum_{i=1}^n\langle a^{(i)},x\rangle^2$. 
Similarly, consider
 $x^TCC^Tx=\|C^Tx\|_2^2=\sum_{i\in I}\langle c_i,x\rangle^2
 =\sum_{i\in I}(1+c_i^2)\langle a^{(i)},x\rangle^2$,
where indices $c_i = \cos\theta_i$. We have,
\begin{eqnarray*}
 |x^TAA^Tx-x^TCC^Tx|&=&\left|\sum_{i\in I\cup J}\langle a^{(i)},x\rangle^2-\sum_{i\in I}(1+c_i^2)\langle a^{(i)},x\rangle^2\right|\\
 &{=}& {\left|\sum_{j\in J}\langle a^{(j)},x\rangle^2-\sum_{i\in I}c_i^2\langle a^{(i)},x\rangle^2\right|}\\
 &{\leq}& {\sum_{(i,j)\in I\times J}\left|\langle a^{(j)},x\rangle^2-c_i^2\langle a^{(i)},x\rangle^2\right|}\\
\end{eqnarray*}
where the set $I\times J$ consists of pairs of
 indices  $(i,j)$ that are matched.
Next, we consider an  individual term in the sum.
Let column $a^{(j)}$ be decomposed as follows:
\[
 a^{(j)}=c_ia^{(i)}+s_iw,
\]
where $s_i=\sin\theta_i$ and $w=\|a^{(i)}\|\bar w$ with $\bar w$ a unit vector that is orthogonal 
to $a^{(i)}$. Then,
\begin{eqnarray*}
 |\langle a^{(j)},x\rangle^2-c_i^2\langle a^{(i)},x\rangle^2|&=&
 \left|\langle c_ia^{(i)}+s_iw,x\rangle^2-c_i^2\langle a^{(i)},x\rangle^2\right|\\
 &=&\left|c_i^2\langle a^{(i)},x\rangle^2+2c_is_i\langle a^{(i)},x\rangle\langle w,x\rangle+s_i^2\langle w,x\rangle^2-c_i^2\langle a^{(i)},x\rangle^2\right|\\
 &=&|\sin2\theta_i\langle a^{(i)},x\rangle\langle w,x\rangle+\sin\theta_i^2\langle w,x\rangle^2|\\\
\end{eqnarray*}

If $t_i=\tan\theta_i$, then 
 $\sin2\theta_i=\frac{2t_i}{1+t_i^2}$.  Using the fact that
$|\langle w,x\rangle|\leq\|a^{(i)}\|\equiv\eta_i$ and $\langle a^{(i)},x\rangle\leq \eta_i$, we get
\begin{eqnarray*}
 |\sin2\theta_i\langle a^{(i)},x\rangle\langle w,x\rangle+\sin\theta_i^2\langle w,x\rangle^2|&\leq&
 \eta_i^2\sin2\theta_i\left[1+\frac{\sin^2\theta_i}{2\sin\theta_i\cos\theta_i}\right]\\
 &=&\eta_i^2\sin2\theta_i\left[1+\frac{\tan\theta_i}{2}\right]\\
 &\leq&\frac{2\eta_i^2 t_i+(\eta_i t_i)^2}{1+t_i^2}\\
 &\leq&2\eta_i^2 t_i+(\eta_i t_i)^2.
\end{eqnarray*}

Now, since  our algorithm combines two columns only if 
$\tan(\theta_i)\leq \eps$ (or $\cos^2\theta \geq 1/(1+\eps^2)$),
we have
\[
 |\langle a^{(j)},x\rangle^2-c_i^2\langle a^{(i)},x\rangle^2|
 \leq 2\eta_i^2\eps+ \eta_i^2\eps^2 \leq 3\eps\eta_i^2
\]
as $\eps<1$. 
Thus, we have
\[
  |x^TAA^Tx-x^TCC^Tx|\leq 3\eps \sum_{i\in I}\|a^{(i)}\|^2\leq3\eps \|A\|_F^2.
\]
 \end{proof}
Note that the above bound will be better in practice because the
last inequality may not be tight. In fact
 improving this result could be an  interesting question to investigate.
We also observe that the above result will hold even if
we consider combining multiple columns which are 
within the angle $\theta=\tan^{-1}(\eps)$
from each other into one in Algorithm~\ref{alg:coarsening}. 
The number of columns $c$ in the coarsened matrix $C$ will depend on the given data 
(the number of pairs of columns that are within the desired angle).
 The above lemma is similar to the results established in~\cite{liberty2013simple} 
 for deterministic matrix sketching. 
It can  be used  to develop the following error bounds.

\begin{theorem}\label{theo:1}
  Given  $A\in\RR^{m\times  n}$,  let  $C\in\RR^{m\times  c}$  be  the
  coarsened matrix of $A$ obtained by one level coarsening of $A$ with
  columns $a^{(i)}$ and $a^{(j)}$ combined if $\tan\theta_{i}\leq
  \eps$,  for $0<\eps<1$.   
Let $H_k$  be the matrix consisting of the top  $k$ left  singular
  vectors of $C$ as columns, for $k\leq c$. Then, we have
\begin{eqnarray}
\|A-H_kH_k^TA\|_F^2&\leq&\|A-A_k\|_F^2+6k\eps\|A\|_F^2\\
\|A-H_kH_k^TA\|_2^2&\leq&\|A-A_k\|_2^2+6\eps\|A\|_F^2,
\end{eqnarray} 
where $A_k$ is the best rank $k$ approximation of $A$.
 \end{theorem}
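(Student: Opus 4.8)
The plan is to reduce the whole theorem to the single operator-norm estimate supplied by Lemma~\ref{lemm:RQ} and then run a standard eigenvalue-perturbation argument. First I would observe that, since $AA^T$ and $CC^T$ are symmetric, the bound $|x^TAA^Tx - x^TCC^Tx| \leq 3\eps\|A\|_F^2$ holding for all unit $x$ is exactly the statement $\|AA^T - CC^T\|_2 \leq 3\eps\|A\|_F^2$. I will write $M = AA^T$, $\tilde M = CC^T$, $\Delta = 3\eps\|A\|_F^2$, and $P = H_kH_k^T$, noting that $P$ is the orthogonal projector onto the top-$k$ eigenspace of $\tilde M$. By Weyl's inequality, $\|M - \tilde M\|_2 \leq \Delta$ yields $|\lambda_i(M) - \lambda_i(\tilde M)| \leq \Delta$ for every $i$, i.e. $|\sigma_i^2(A) - \sigma_i^2(C)| \leq \Delta$; this per-eigenvalue control is the single fact that drives both inequalities.

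For the Frobenius bound I would use that $I - P$ is a symmetric idempotent, so $\|A - PA\|_F^2 = \trace((I-P)M) = \trace(M) - \trace(H_k^T M H_k)$. Then I split $\trace(H_k^T M H_k) = \trace(H_k^T \tilde M H_k) + \trace(H_k^T(M - \tilde M)H_k)$. The first term equals $\sum_{i=1}^k \lambda_i(\tilde M)$ exactly, because the columns of $H_k$ are the leading eigenvectors of $\tilde M$; the second has absolute value at most $k\|M - \tilde M\|_2 \leq k\Delta$. Combining with Weyl gives $\trace(H_k^T M H_k) \geq \sum_{i=1}^k \sigma_i^2(A) - 2k\Delta$, and since $\trace(M) - \sum_{i=1}^k\sigma_i^2(A) = \|A - A_k\|_F^2$, the Frobenius inequality follows with constant $2k\Delta = 6k\eps\|A\|_F^2$.

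For the spectral bound I would write $\|A - PA\|_2^2 = \|(I-P)M(I-P)\|_2$ and decompose $(I-P)M(I-P) = (I-P)\tilde M(I-P) + (I-P)(M - \tilde M)(I-P)$. Because $P$ is the eigenprojector of $\tilde M$, the matrix $\tilde M$ commutes with $I-P$, so $\|(I-P)\tilde M(I-P)\|_2 = \lambda_{k+1}(\tilde M) = \sigma_{k+1}^2(C)$, while the perturbation term has norm at most $\Delta$. Applying Weyl once more, $\sigma_{k+1}^2(C) \leq \sigma_{k+1}^2(A) + \Delta$, and since $\|A - A_k\|_2^2 = \sigma_{k+1}^2(A)$, the triangle inequality gives $\|A - PA\|_2^2 \leq \sigma_{k+1}^2(A) + 2\Delta = \|A-A_k\|_2^2 + 6\eps\|A\|_F^2$.

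The routine parts are the trace identities and the operator-norm reformulation; the only place requiring care is checking that $\trace(H_k^T \tilde M H_k)$ equals the sum of the top $k$ eigenvalues of $\tilde M$ exactly (not merely bounds it), which relies on $H_k$ being the genuine leading eigenspace of $CC^T$ by construction, and likewise that $\tilde M$ commutes with its own eigenprojector in the spectral step. The real content is Weyl's inequality transferring the Lemma~\ref{lemm:RQ} perturbation into per-eigenvalue control; everything else is bookkeeping. I expect the main conceptual move to be simply recognizing that the entire theorem hinges on converting the Rayleigh-quotient bound into $\|AA^T - CC^T\|_2 \leq 3\eps\|A\|_F^2$, after which both estimates are textbook perturbation arguments.
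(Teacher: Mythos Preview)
Your proposal is correct and is essentially the same argument as the paper's: both convert Lemma~\ref{lemm:RQ} into the operator-norm bound $\|AA^T-CC^T\|_2\le 3\eps\|A\|_F^2$, invoke Weyl's inequality for the per-singular-value control, and use the same trace/projection identities (the paper writes $\|A\|_F^2-\|A^TH_k\|_F^2$ and a vector decomposition $x=\alpha y+\beta z$ where you write $\trace((I-P)M)$ and $(I-P)M(I-P)$). Your packaging is slightly more matrix-theoretic and compact, but the content is identical.
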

\begin{proof}

\emph{Frobenius norm error:} First, we prove the Frobenius norm error bound. We can express 
$\|A-H_kH_k^TA\|_F^2$:
\begin{eqnarray}\label{eq:frob1}
\|A-H_kH_k^TA\|_F^2&=&Tr((A-H_kH_k^TA)^T(A-H_kH_k^TA))\\\nonumber
&=&Tr(A^TA-2A^TH_kH_k^TA+A^TH_kH_k^TH_kH_k^TA)\\\nonumber
&=&Tr(A^TA)-Tr(A^TH_kH_k^TA)\\\nonumber
&=&\|A\|_F^2-\|A^TH_k\|_F^2.
\end{eqnarray}
We get the above simplifications using the equalities: $\|X\|_F^2=Tr(X^TX)$ and 
$H_k^TH_k=I$. Let $h^{(i)}$ for $i=1,\ldots,k$ be the columns of $H_k$. Then, the second term in the above equation
is
$\|A^TH_k\|_F^2=\sum_{i=1}^k\|A^Th^{(i)}\|^2$.

From Lemma~\ref{lemm:RQ}, we have for each $i$,
\[
 |\|A^Th^{(i)}\|^2-\|C^Th^{(i)}\|^2|=|\|A^Th^{(i)}\|^2-\sigma_i^2(C)|\leq3\eps \|A\|_F^2,
\]
since $h^{(i)}$'s are the singular vectors of $C$. Summing over $k$ singular vectors, we get
\begin{equation}\label{eq:part1}
 |\|A^TH_k\|_F^2-\sum_{i=1}^k\sigma_i^2(C)|\leq3\eps k \|A\|_F^2.
\end{equation}
From  perturbation theory~\cite[Thm. 8.1.4]{golub2012matrix}, we have
for $i=1,\ldots,c$:
\[
 |\sigma_i^2(C)-\sigma_i^2(A)|\leq \|AA^T-CC^T\|_2 . 
\]
Next, Lemma~\ref{lemm:RQ} implies:
\[
\|AA^T-CC^T\|_2=\max_{x\in\RR^n:\|x\|=1}|x^T(AA^T-CC^T)x|\leq3\eps \|A\|_F^2 . 
\]
Hence, summing over $k$ singular values,
\begin{equation}\label{eq:part2}
 \left|\sum_{i=1}^k\sigma_i^2(C)-\sum_{i=1}^k\sigma_i^2(A)\right|\leq3\eps k \|A\|_F^2.
\end{equation}
Combining \eqref{eq:part1} and \eqref{eq:part2}, we get
\[
 \left|\|A^TH_k\|_F^2-\sum_{i=1}^k\sigma_i^2(A)\right|\leq6\eps k \|A\|_F^2.
\]
Along  with~\eqref{eq:frob1} this relation
 gives us  the Frobenius norm error bound,
since $\|A\|_F^2-\sum_{i=1}^k\sigma_i^2(A)=\|A-A_k\|_F^2$.

\emph{Spectral norm error:} 
Let $\CH_k=range(H_k)=span(h^{(1)},\ldots,h^{(k)})$ and let $\CH_{n-k}$ be the orthogonal
complement of $\CH_k$. For $x\in\RR^n$, let $x=\alpha y+\beta z$, where $y\in\CH_k,z\in\CH_{n-k}$
and $\alpha^2+\beta^2=1$. Then,
\begin{eqnarray*}
 \|A-H_kH_k^TA\|^2_2&=&\max_{x\in\RR^n:\|x\|=1}\|x^T(A-H_kH_k^TA)\|^2\\
 &=& \max_{y,z}\|(\alpha y^T+\beta z^T)(A-H_kH_k^TA)\|^2\\
 &\leq&\max_{y\in\CH_k:\|y\|=1}\|y^T(A-H_kH_k^TA)\|^2+\max_{z\in\CH_{n-k}:\|z\|=1}\|z^T(A-H_kH_k^TA)\|^2\\
 &=&\max_{z\in\CH_{n-k}:\|z\|=1}\|z^TA\|^2,
\end{eqnarray*}
since $\alpha,\beta\leq1$ and for any $y\in\CH_k,y^TH_kH_k^T=y^T$, so 
the first term is zero and 
for any $z\in\CH_{n-k},z^TH_kH_k^T=0$. Next,
\begin{eqnarray*}
 \|z^TA\|^2&=&\|z^TC\|^2+[\|z^TA\|^2-\|z^TC\|^2]\\
 &\leq& \sigma_{k+1}^2(C)+3\eps\|A\|_F^2\\
  &\leq& \sigma_{k+1}^2(A)+6\eps\|A\|_F^2\\
  &=&\|A-A_k\|_2^2+6\eps\|A\|_F^2.
\end{eqnarray*}
Since $|\|z^TA\|^2-\|z^TC\|^2|\leq3\eps\|A\|_F^2$ from Lemma~\ref{lemm:RQ}, 
$\max_{z\in\CH_{n-k}:\|z\|=1}\|z^TC\|^2=\sigma_{k+1}^2(C)$, and 
$|\sigma_i^2(C)-\sigma_i^2(A)|\leq \|AA^T-CC^T\|_2\leq3\eps\|A\|_F^2$.
\end{proof}
We  observe  that Theorem~\ref{theo:1}  is  similar to  the
results     developed     for     randomized     sampling     methods,
see~\cite{drineas2004clustering,drineas2006fast}.      One     notable
difference is that  to achieve the above result for  a given rank $k$,
the  method based  on column  norms for  randomized sampling  requires
$c=\Theta(k/\eps^2)$ columns to  be sampled to form $C$.   For a small
$\eps$, the number of columns $c$  will be quite large.  The error for
a given rank $k$ diminishes as $c$  increases, but when $k$ is large, a
large number of columns will be  required to get a good approximation.
For the coarsening method, the above  error bounds hold for any $k\leq
c$, and the  number of columns $c$ will depend  primarily on the given
data.  The  error will be  smaller if  the angles between  the columns
that are  combined are smaller.  The  number of columns is  related to
these  angles and this  in turn  depends on 
the  redundancy among columns   of the  given
matrix. As  future work it would be interesting to say
how many  distinct columns will be needed 
to ensure that the subspace spanned by the columns of $C$ 
is a good rank $k$ approximation to the range of $A$.
 
 The subspace iteration algorithm has been extensively studied, and 
 the most recent analyses of subspace iteration appeared 
 in~\cite{halko2011finding,gu2015subspace} and 
 \cite{saad2016analysis}.
The convergence of subspace iteration depends on the 
quality of  the initial subspace as an approximation to the 
dominant subspace, i.e., the subspace associated with the top singular
values.
The above theorem shows that the coarsening algorithm gives a reasonable 
approximation  to the subspace spanned by the top $k$ singular vectors.
This approximation is therefore a good starting point for 
the incremental SVD algorithm presented in section~\ref{sec:coarsen}.

\section{Applications}\label{sec:appl} 
Here, we  present a few  applications where the (multilevel) coarsening method
discussed in the previous section can be employed.
The matrices encountered  in these applications  are  typically large,
and  sparse, often representing graphs.

\paragraph{i.  Latent  Semantic Indexing  -} Latent  semantic indexing
(LSI)  is  a well-established  text  mining  technique for  processing
queries            in            a            collection            of
documents~\cite{deerwester1990indexing,landauer1998introduction,
  berry1995using,kokiopoulou2004polynomial}.   Given  a user's  query,
the  method is  used  to retrieve  a  set of  documents  from a  given
collection  that  are  most  relevant to  the  query.   The  truncated
SVD~\cite{berry1995using}                  and                 related
techniques~\cite{kokiopoulou2004polynomial} are  common tools  used in
LSI.   The  argument  exploited  is  that  a  low  rank  approximation
preserves the important underlying structure associated with terms and
documents,   and   removes   the   noise  or   variability   in   word
usage~\cite{drineas2006fast}.
Multilevel coarsening for LSI  was considered in \cite{hss:mlevel-08}.
Here, we revisit and expand this idea and show how hypergraph coarsening
can be employed in this application.

\paragraph{ii. Projective clustering -}
Several projective clustering methods such as Isomap~\cite{tenenbaum2000global},
Local Linear Embedding (LLE)~\cite{roweis2000nonlinear}, spectral 
clustering~\cite{ng2001spectral}, 
subspace clustering~\cite{parsons2004subspace,elhamifar2009sparse},
Laplacian eigenmaps~\cite{belkin2003laplacian} and others involve 
a partial eigen-decomposition or a partial SVD of graph Laplacians.
Various kernel based learning methods~\cite{muller2001introduction}
also require the (partial) SVD of large graph Laplacians.
 In most applications today, the number of data-points 
is large and computing  singular vectors (eigenvectors) is expensive 
in most cases. Graph coarsening is  an effective strategy to reduce
the number of  data-points in these applications, 
see~\cite{fang2010multilevel,oliveira2005multi} for a few illustrations.

\paragraph{iii. Eigengene analysis -}
Analyzing   gene expression  DNA microarray  data has  become an
important   tool   in  the   study   of   a  variety   of   biological
processes~\cite{alter2000singular,raychaudhuri2000principal,paschou2007intra}.
In  a microarray  dataset, we  have  $m$ genes  (from $m$  individuals
possibly from different populations) and  a series of $n$ arrays probe
genome-wide expression levels in $n$ different samples, possibly under
$n$ different experimental conditions.  The data is large with several
individuals and  gene expressions,  but is  known to  be of  low rank.
Hence,  it has  been  shown  that a  small  number  of eigengenes  and
eigenarrays (few singular  vectors) are sufficient to  capture most of
the     gene      expression     information~\cite{alter2000singular}.
Article~\cite{paschou2007intra}  showed  how column  subset  selection
(CSSP) can  be used for  selecting a  subset of gene  expressions that
describe the population well in terms of spectral information captured
by the reduction.  In the next section,  we will show how hypergraph coarsening can
be  adapted  to  choose  a  good  (small)  subset  of  genes  in  this
application.

\paragraph{iv. Multilabel Classification -}
The last application we consider is that of multilabel classification in machine 
learning~\cite{tsoumakas2008multi,ubaru17a}.
    In the multilabel classification problem, we are given a set of labeled training data
    $\{(x_i,y_i)\}_{i=1}^n$, where each $x_i\in\RR^p$ is an  input feature for a data  instance 
    which belongs to  one or more classes, and 
    $y_i\in\{0,1\}^d$  are vectors indicating the corresponding labels (classes) to which the data instances 
    belong. A vector $y_i$  has a one at the 
   $j$-th coordinate if the instance belongs to $j$-th class.
     We wish to learn a mapping (prediction rule) between the features and the labels, in order
 to be able to  predict a class label vector $y$ of a new data point $x$.
    Such multilabel classification problems occur in
    many domains such as  computer vision, text   mining,    and    bioinformatics
    \cite{trohidis2008multi,tai2012multilabel}, 
    and modern applications involve a large number of  labels. 
    
    A  common approach  to handle  classification problems  with many
    classes is to begin by reducing  the effective number of labels by
    means of so-called embedding-based approaches. The label dimension
    is  reduced by  projecting label  vectors onto  a low  dimensional
    space,   based   on  the   assumption   that   the  label   matrix
    $Y=[y_1,\ldots, y_n]$ has a low-rank.  The reduction  is achieved in
    different      ways,       for      example, by      using      SVD
    in~\cite{tai2012multilabel}  and column  subset  selection
    in~\cite{bi2013efficient}.   In  this  work,  we  demonstrate  how
    hypergraph  coarsening can  be employed  to reduce  the number  of
    classes, and yet achieve accurate learning and prediction.

    Article~\cite{savas2011clustered}  discusses a number of methods that
rely on clustering the data first in order to build a reduced dimension
 representation. It can be viewed as a top-down approach whereas coarsening 
is a bottom-up method.

\section{Numerical Experiments}\label{sec:expt}
This section describes a number of experiments to illustrate
the  performance of the different methods discussed. The latter 
part of the section focuses on  the performance of the coarsening
 method in the applications discussed above.

 \begin{figure}[tb]
\includegraphics[width=0.33\textwidth,trim={0.8cm 0cm 0.8cm 0cm}]
{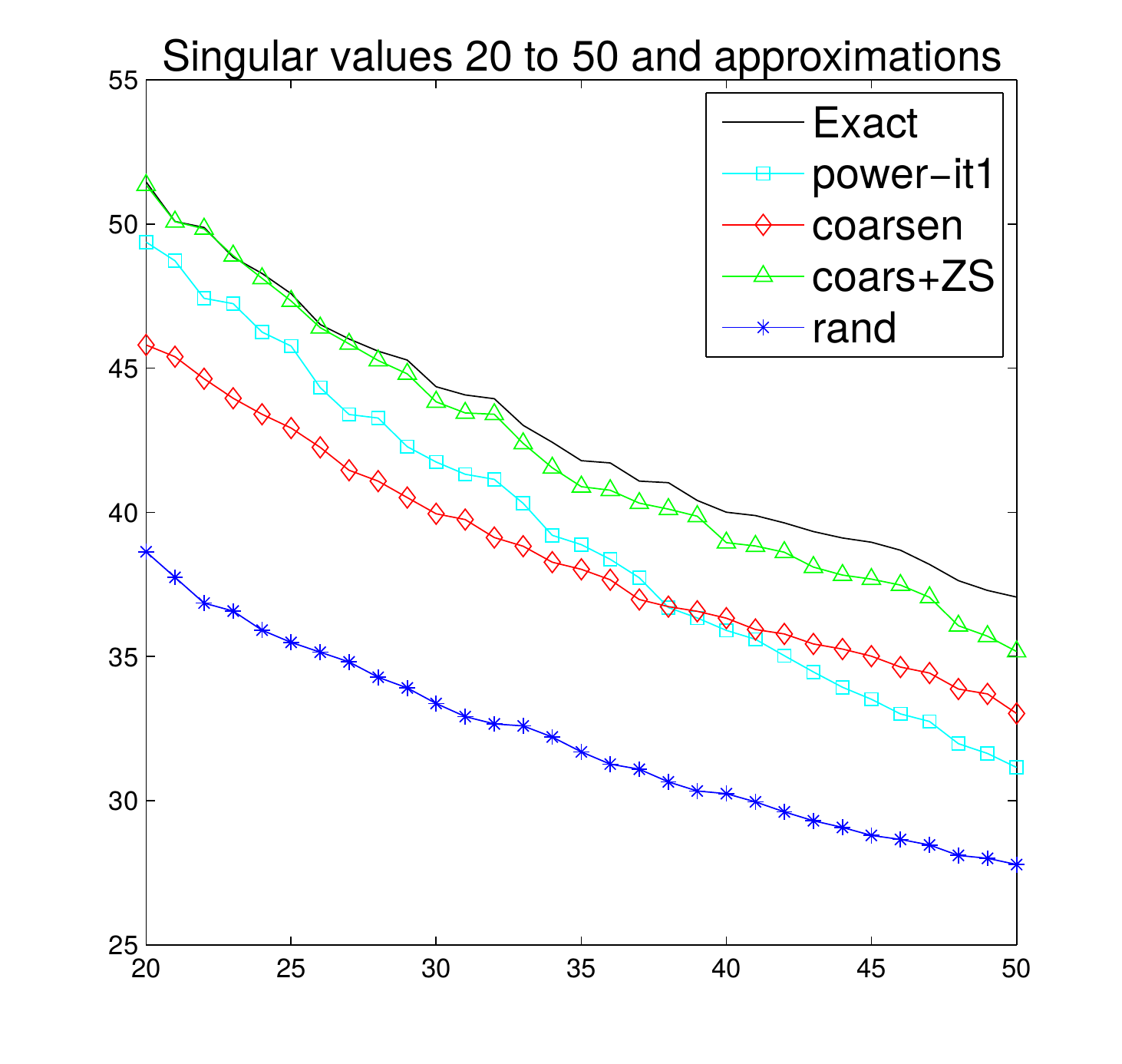}
\includegraphics[width=0.325\textwidth,trim={0.8cm 0cm 0.7cm 0cm}]
{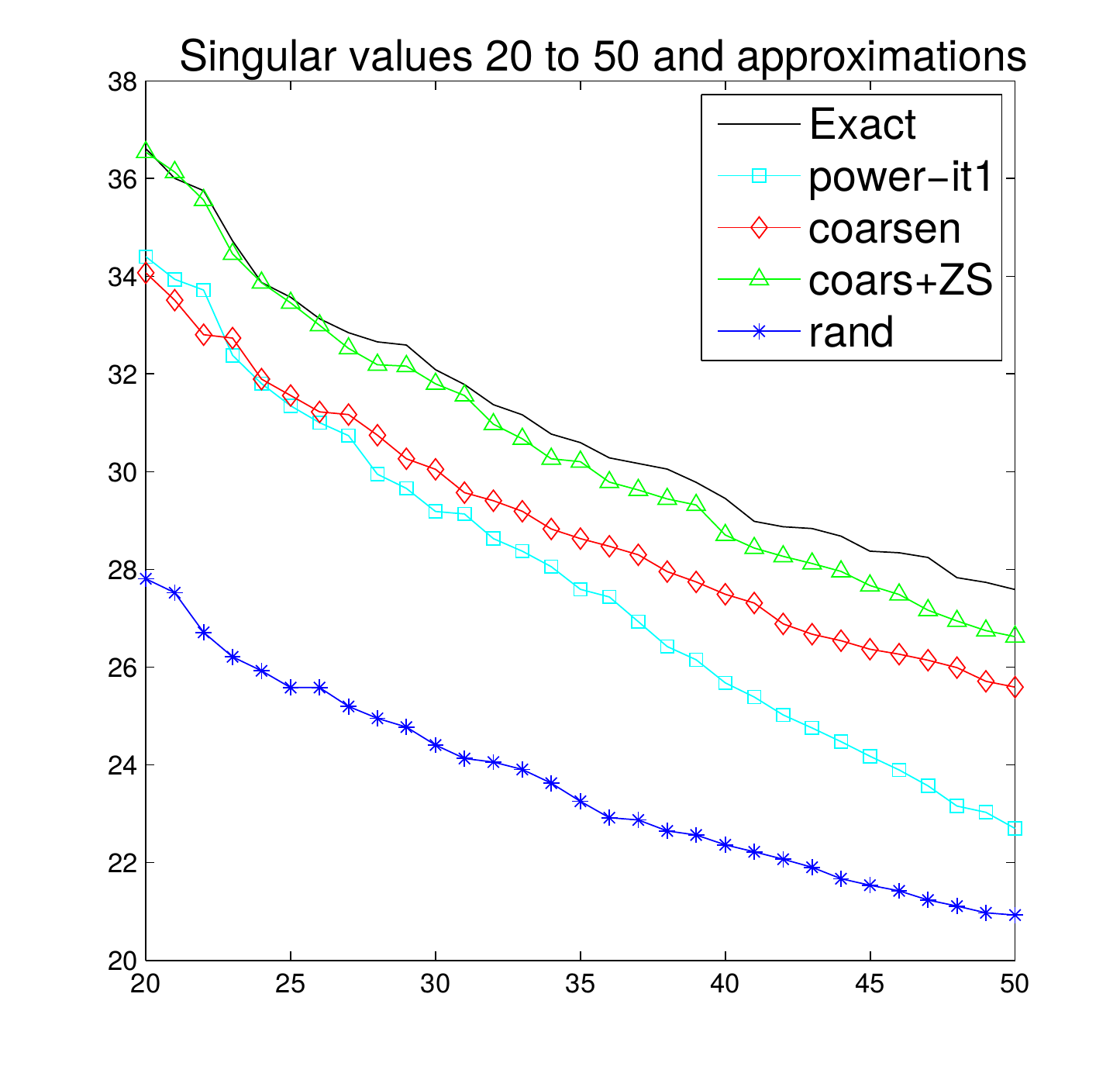} 
\includegraphics[width=0.325\textwidth,trim={0.8cm 0cm 0.8cm 0cm}]
{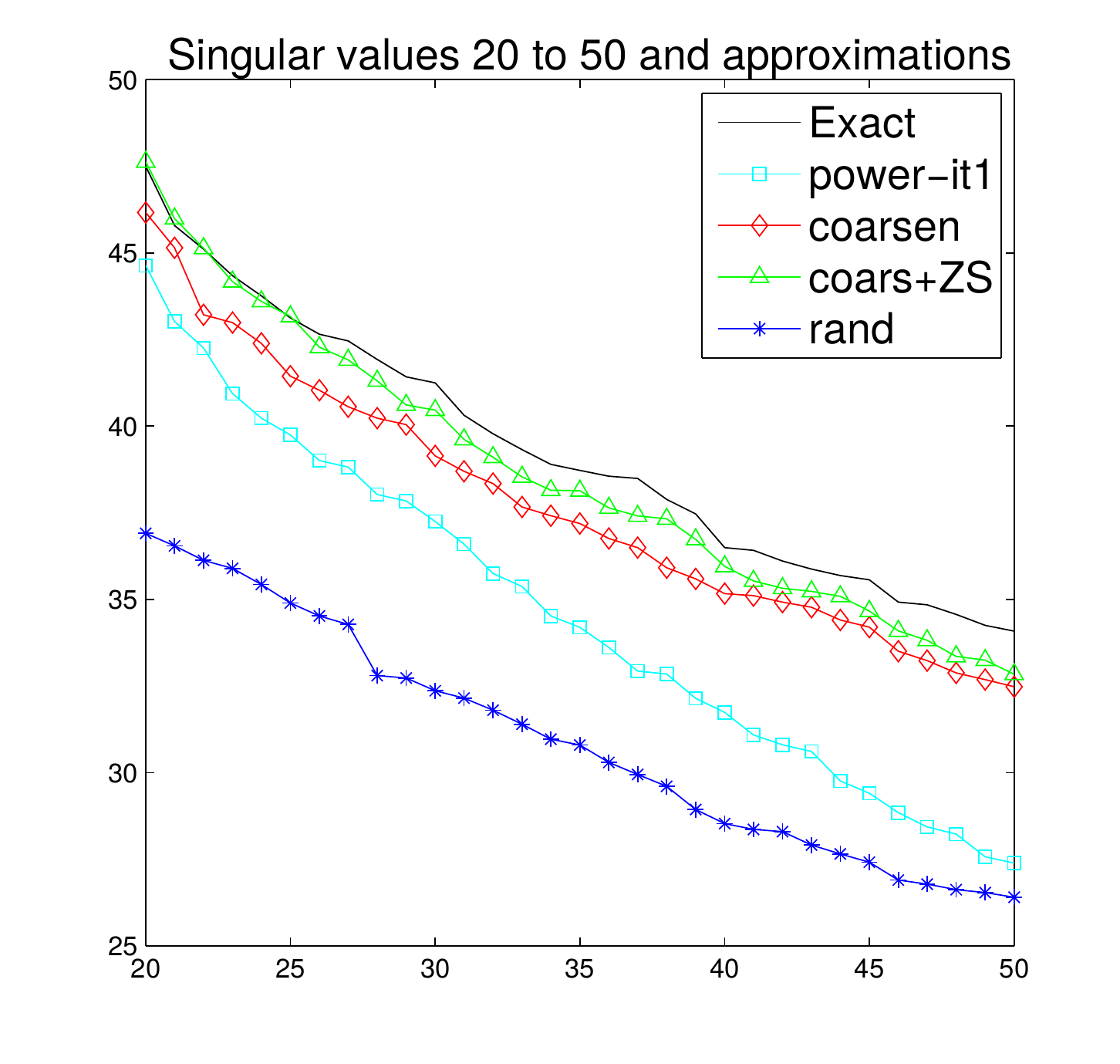} 
\vskip-0.1in
\caption{Results for the datasets CRANFIELD (left), MEDLINE (middle), and TIME (right).} 
\label{fig:coars_1}
\end{figure}

\subsection{SVD Comparisons}
In  the  first  set  of experiments,  we  start with  three small term-by-document
datasets and compare the  sampling, coarsening and combined methods
to compute the  SVD.  The tests
are with unweighted versions of the CRANFIELD dataset (1398 documents,
5204 terms),  MEDLINE dataset  (1033 documents,  8322 terms)  and TIME
dataset (425 documents, 13057 terms). These datasets
are used in the  latent
semantic indexing application examples. 

Figure \ref{fig:coars_1} illustrates the following experiment with the
three datasets.  Results from four  different methods are plotted. The
first solid curve (labeled `exact') shows the singular values numbered
20 to 50 of a matrix  $A$ computed using the \texttt{svds} function in
Matlab (the  results obtained by the  four methods for the  top twenty
singular values  were similar).  The diamond  curve labeled `coarsen',
shows the  singular values obtained  by one level of  coarsening using
Algorithm~\ref{alg:coarsening}  (details will  be  given later).   The
star  curve (labeled  `rand') shows  the singular  values obtained  by
random column  norm sampling method (columns  sampled with probability
$p_i=\|a_i\|^2_2/\|A\|_F^2$), with a sample size $c$ equal to the size
of coarsened matrix $C$ obtained with one level of coarsening. We note
that  the result  obtained  by  coarsening is  much  better than  that
obtained by random  sampling method, particularly for  large $k$. This
behavior was also  predicted by our theoretical  results.  However, we
know that the approximations obtained by either sampling or coarsening
cannot be highly  accurate.  In order to get improved  results, we can
invoke  the  incremental SVD  algorithms.   The  curve with  triangles
labeled `coars+ZS' shows  the singular values obtained  when the Zha Simon
algorithm was employed to improve the results obtained  by the coarsening
algorithm.   Here, we  consider  the singular  vectors  of the  coarse
matrix  and use  the  remaining part  of the  matrix  to update  these
singular  vectors and  singular  values.  We  have  also included  the
results      obtained     by      one      iteration     of      power
method~\cite{halko2011finding},  i.e.,  from  the SVD  of  the  matrix
$Y=(AA^T)A\Omega$,  where $\Omega$  is a  random Gaussian  matrix with
same number of columns as the  coarse matrix.  We see that the smaller
singular  values obtained  from the  coarsening algorithms  are better
than those obtained by the one-step power method.

 \begin{figure}[tb]
 \begin{center}
\includegraphics[height=0.35\textwidth,trim={1cm 0cm  1cm 0cm},clip]{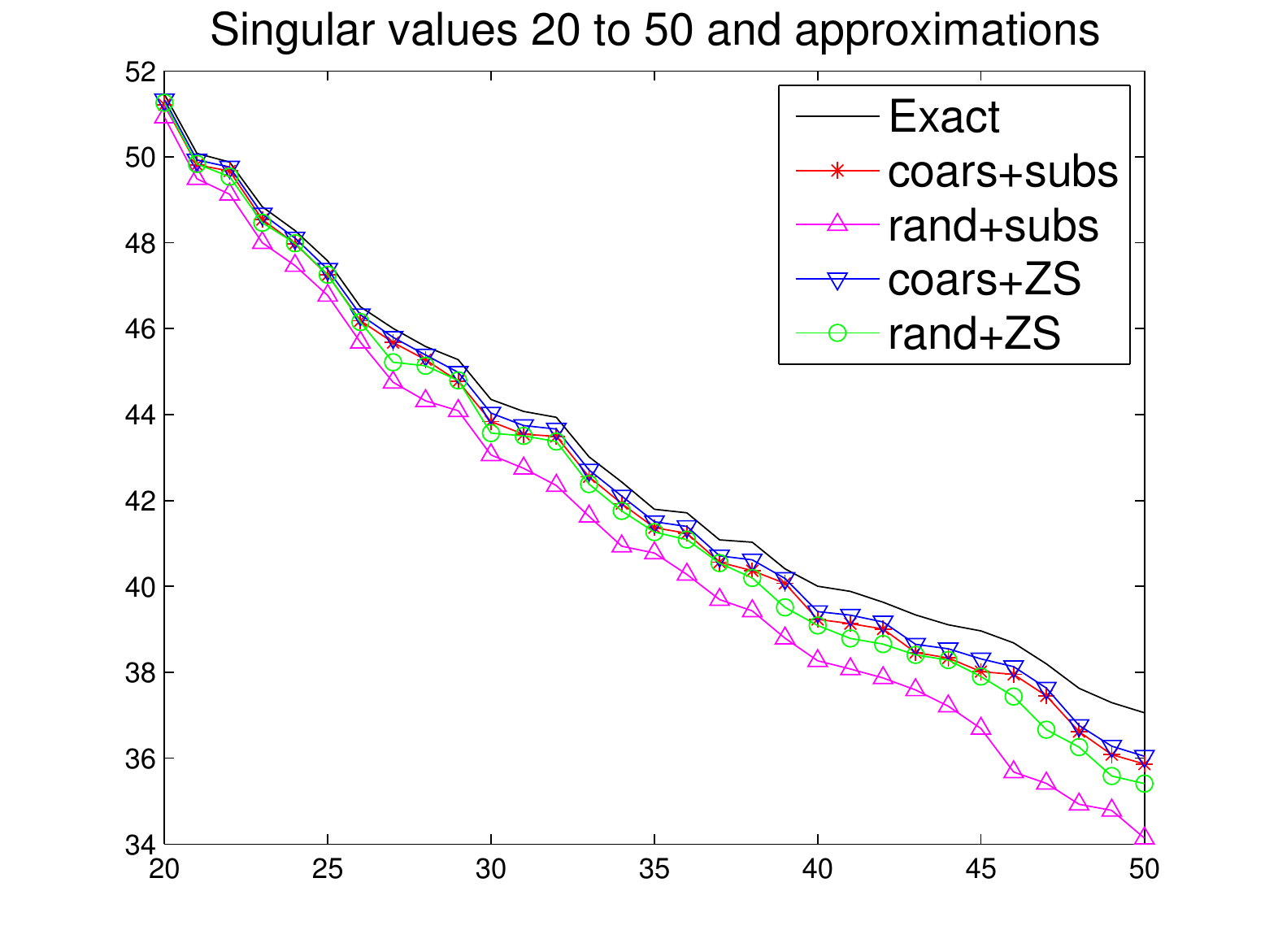}
\includegraphics[height=0.35\textwidth,trim={1cm 0cm  1cm 0cm},clip]{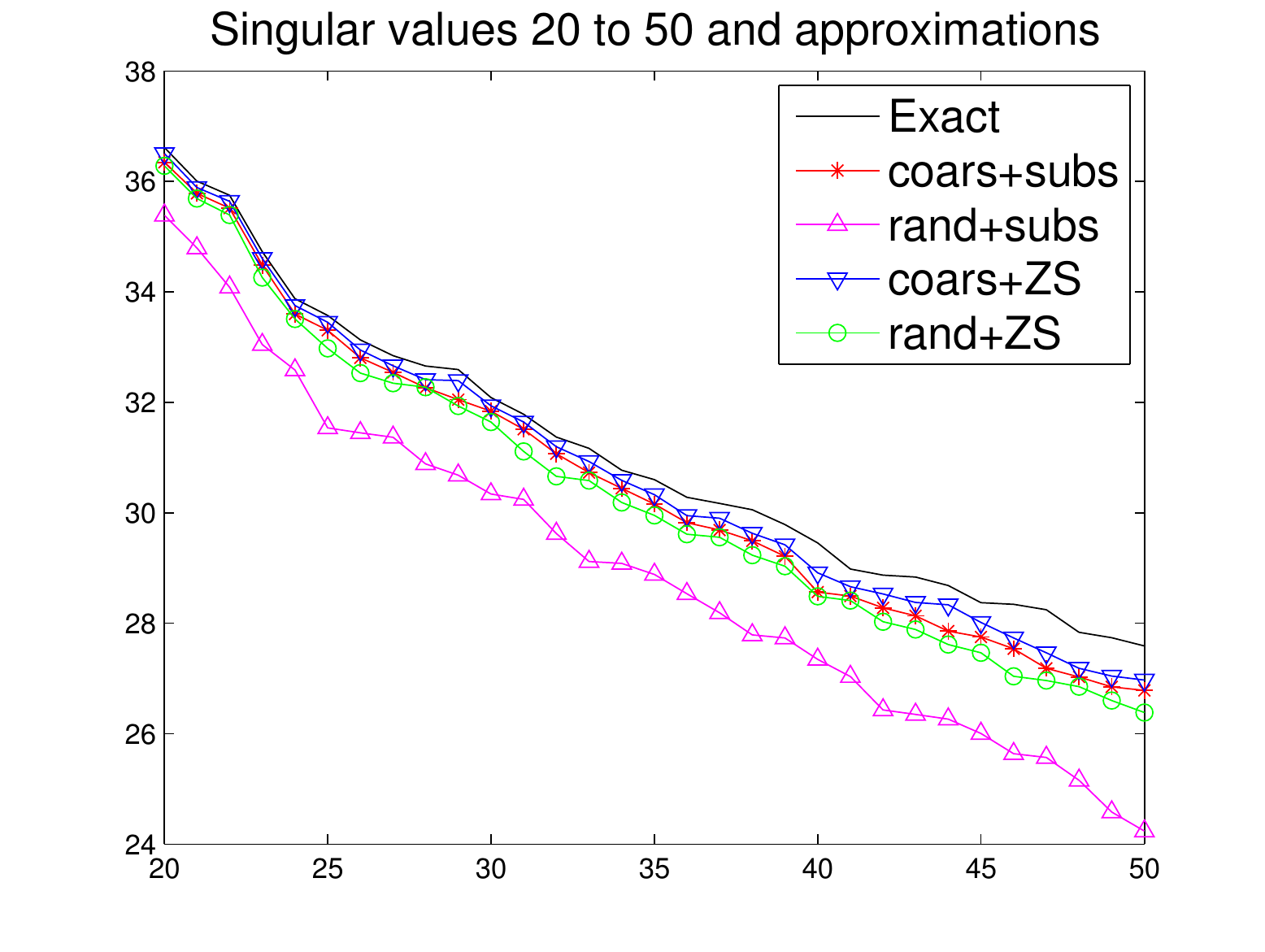} 
\vskip -0.1in
\caption{Second set of results for the CRANFIELD (left) and the MEDLINE  datasets (right).} 
\label{fig:coars_2}
 \end{center}
\end{figure}

As discussed in section~\ref{sec:coarsen}, a possible way to
improve the SVD results  obtained by a coarsening
or random sampling step is to resort to subspace iteration 
or use the SVD update algorithms as in the first experiment.
Figure~\ref{fig:coars_2}  illustrates such results 
with incremental SVD algorithms for the CRANFIELD (left) and the 
MEDLINE (right) datasets. We have not reported the results for the TIME dataset since 
it is hard to distinguish the results obtained by different algorithms for this case.
First, subspace iteration is
performed using the matrix $A$ and the singular vectors obtained from coarsening or 
column norm sampling. 
The curve `coars+subs' (star) corresponds to the singular values obtained when subspace iteration  
was used to improve the SVD obtained by coarsening. Similarly, for the
curve labeled `rand+subs' (triangle up), subspace iteration was used with the singular vectors 
obtained from randomized sampling. We have included the results when the SVD update algorithm 
was used to improve the SVD obtained by coarsening (`coars+ZS') and column norm 
sampling (`rand+ZS'),
respectively.
These plots show that both the SVD update algorithm and subspace iteration
improve the accuracy 
of the SVD significantly.

\begin{table*}[tb!]
\begin{center}
\caption{Low rank approximation: Coarsening, col. norm sampling, and rand+coarsening.
Error1 $=\|A-H_kH_k^TA\|_F$; Error2$=\frac{1}{k}\sum_{i=1}^k\frac{|\hat \sigma_i-\sigma_i|}{\sigma_i}$.}
\label{tab:svdcomp}
{\small
\vskip -0.1in
\begin{tabular}{|l|r|c|r|c|c|c|c|c|c|c|c|c|c|}
\hline
Dataset& \centering{$n$} &$k$& \centering{$c$}&\multicolumn{3}{c|}{Coarsening}&\multicolumn{3}{c|}{Col. norm Sampling}
&\multicolumn{3}{c|}{Rand+Coarsening}\\\cline{5-13}
&&&&Err1&Err2&{time}&Err1&Err2&{time}&Err1&Err2&{time}\\
\hline
Kohonen& 4470& 50 & 1256 & 86.26 & 0.366&1.52s& 93.07 &  0.434 &0.30s & 93.47& 0.566&0.43s\\
aft01 (n)& 8205& 50& 1040 & 913.3& 0.299&2.26s& 1006.2 & 0.614 &0.81s& 985.3 & 0.598&0.63s \\
FA & 10617& 30& 1504 & 27.79& 0.131&2.06s& 28.63 & 0.410 &1.83s& 28.38 & 0.288 & 0.43s\\
chipcool0 & 20082& 30& 2533 & 6.091& 0.313&5.69s& 6.199 & 0.360 &5.92s& 6.183 & 0.301 &1.27s\\
{scfxm1-2b} & 33047 & 25& 2066 & 2256.8 &0.202&2.8s& 2328.8 & 0.263&5.38s& 2327.5 &  0.153&0.57s\\
\hline
thermechTC & 102158& 30& 6286 & 2.063 &0.214&7.8s& 2.079 & 0.279&164s& 2.076 &  0.269&2.51s\\
{HTC9129(n)} & 226340& 25& 14147 & 212.8&0.074&247.6s& 226.34 & 0.102&481.7s& 234.08 & 0.1512&57.8s\\
{ASIC-680(n)}& 682712& 25& 21335& 2913& 0.239&57.4s& 2925 & 0.304 &17.3m& 2931 &  0.448 &13.2s\\
\hline
Webbase1M & 1000005& 25& 15625 & 3502.2 &--&15.3m& 3564.5 & --  &17.8m& 3551.7 &  -- &5.1m\\
{Netherland} & 2216688& 25& 34636 &1.53e4 &--&51.8s& 2.05e4& --  &584.3s& 1.58e4 &  -- &15.3s\\
{adaptive}& 6815744& 25& 106496 &2850.2&--&297.4s&  3256.8& --  &52.6m& 2984.7 &  -- &98.3s\\
{delaunay-23}&8388608& 25& 131072 & 3145.4&--&13.2m& 3904.2 & --  &38.2m& 3446.1 &  -- &3.8m\\
\hline
\end{tabular}
}
\end{center}
\end{table*}

 \begin{figure}[tb]
 \begin{center}
\includegraphics[height=0.31\textwidth,trim={0.5cm 0cm  1cm 0cm},clip]{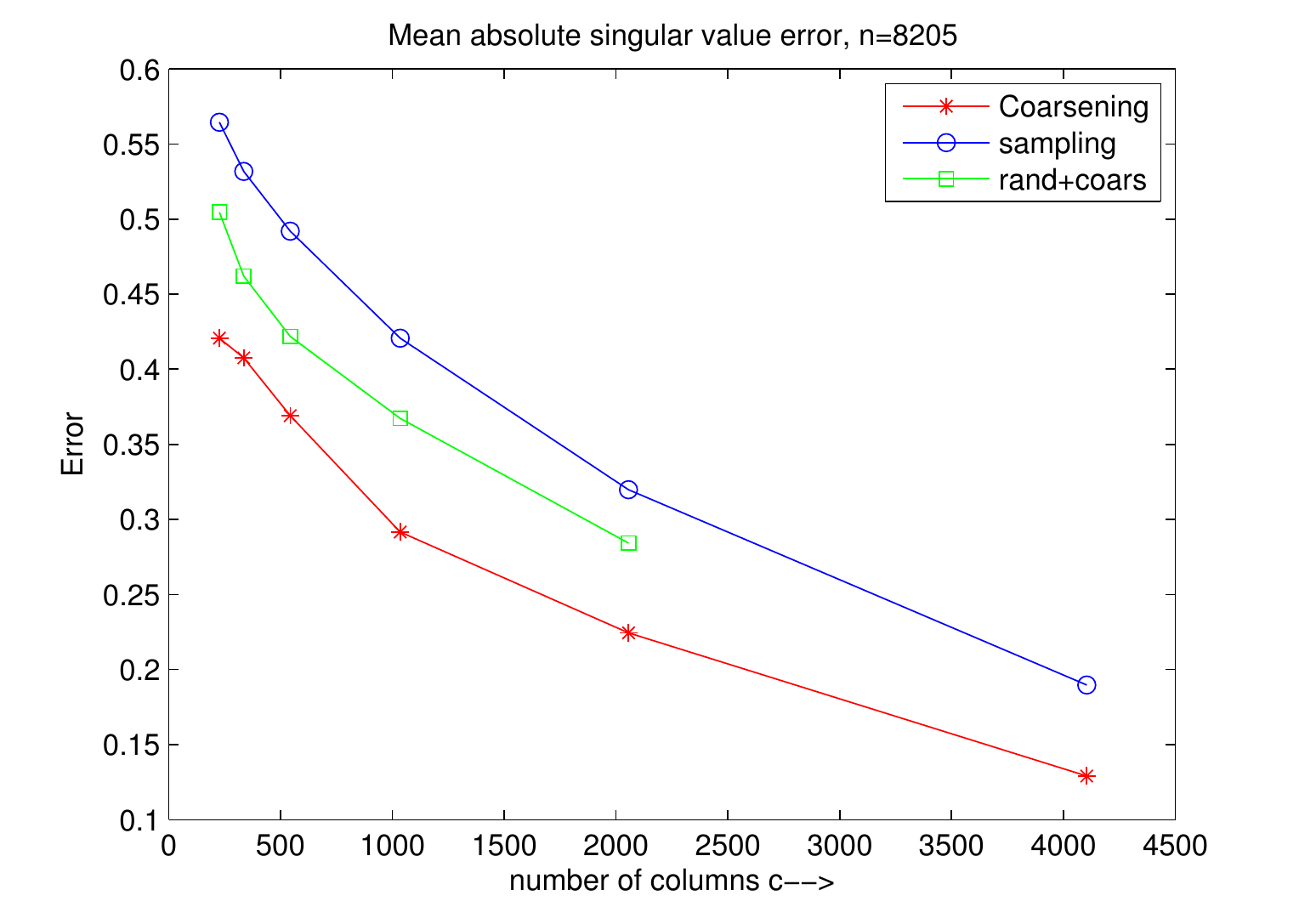}
\includegraphics[height=0.31\textwidth,trim={0.4cm 0cm  1cm 0cm},clip]{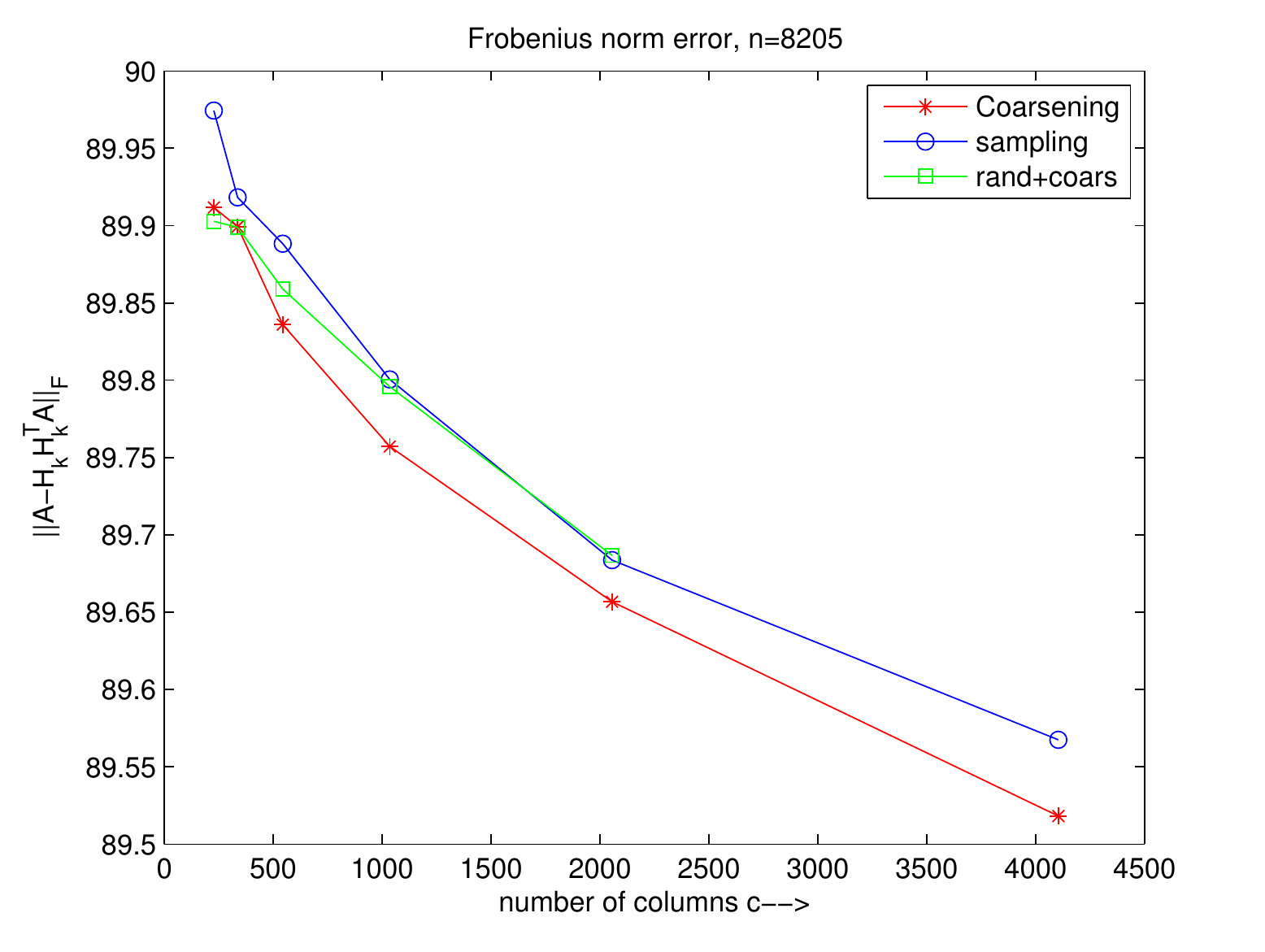} 
\vskip -0.1in
\caption{Mean absolute singular value errors 
$\frac{1}{k}\sum_{i=1}^k\frac{|\hat \sigma_i-\sigma_i|}{\sigma_i}$ (Left) and 
Frobenius norm errors $\|A-H_kH_k^TA\|_F$ (right) for the three methods 
for \texttt{aft01} dataset ($k=30$).} 
\label{fig:coars_3}
 \end{center}
\end{figure}

Next, we compare the {computational times and} performances of our coarsening method and the column norm sampling
method for computing the low rank approximation 
of matrices. We also consider the combined method of uniform downsampling followed by
coarsening discussed in the
introduction and in section~\ref{sec:coarsen}.
Table~\ref{tab:svdcomp} shows comparison results between the three methods, namely,
Coarsening, column norm sampling, and uniform random sampling+coarsening for low rank approximation 
of matrices from various applications.
All matrices were obtained from 
the SuiteSparse matrix collection: 
\url{https://sparse.tamu.edu/}~\cite{davis2011university} and are sparse.
The errors reported are the Frobenius norm error $=\|A-H_kH_k^TA\|_F$ in computing the 
rank $k$ approximation ($k$ listed in third column).
For the smaller matrices, we also report the average
absolute normalized error in the singular values
$=\frac{1}{k}\sum_{i\le k}\frac{|\hat \sigma_i-\sigma_i|}{\sigma_i}$
for the rank $k$, which is related to our result in Lemma~\ref{lemm:RQ}
and spectral approximation which is of interest. {We also report the computational time in seconds (s) or minutes (m)  for the three methods (using \texttt{cputime} function in matlab). All experiments were
conducted on an Intel i5-4590 CPU @3.30GHz machine.}
The size of the input matrix and the number of columns in the 
coarsened/subsampled matrix are listed in the second and fourth columns, respectively.
{ Some of the matrices (indicated by (n)) which had large Frobenius norm were normalized by $\sqrt{\|X\|_F}$.
For smaller matrices, the exact singular values $\sigma_i$'s are available in the database. For matrices with $n>10^5$,  these were computed using `svds' function.
For larger matrices with $n>10^6$, the exact singular values cannot be computed, hence
we were unable to report Error2 for these matrices. }

{For the combined method (random sampling+coarsening) we proceed as follows:  First, half of the columns ($n/2$) are randomly sampled (uniform downsampling),
and then a multilevel coarsening is performed with one level less than the pure coarsening method
reported in the previous column. 
We note that, coarsening clearly yields  better results (lower errors) than the column norm sampling method. 
The combined method of random sampling+coarsening works 
well and performs better than norm sampling in most cases. Coarsening is relatively  expensive for smaller matrices.
However, we note that for larger (and sparse) matrices, coarsening becomes less expensive than column norm sampling, which requires computing the norms of $n$ columns and then sample based on the ratio of the norms.
For really large matrices, the `rand+coarse' method seems to be a good choice due to its computational cost.
The coarsening methods scale easily to even larger matrices. However, estimating the error $\|A-H_kH_k^TA\|_F$ becomes impractical.}
 
For further illustration, figure~\ref{fig:coars_3} plots the two errors $\|A-H_kH_k^TA\|_F$  and
$\frac{1}{k}\sum_{i\le k}\frac{|\hat \sigma_i-\sigma_i|}{\sigma_i}$ with $k=30$
for the three methods for the \texttt{aft01} dataset 
when different levels of coarsening were used, i.e., 
the number of columns sampled/coarsened were increased. 
For `rand+coars', we do not have errors for $c=n/2$ since this was obtained uniform sampling.
For a  smaller number of columns, i.e., more levels in coarsening, 
the Frobenius norm error for 
rand+coarsen  approaches that of pure coarsening.

In all the above experiments, we have used maximum matching for coarsening.
That is, in Algorithm~\ref{alg:coarsening}, we match the current column with 
the column with which it has maximum inner product. The choice of $\epsilon$, 
the parameter that decides the angle for matching does not seem to affect the errors 
directly. 
If we choose smaller $\epsilon$, we will have a larger coarse matrix $C$ (fewer columns
are combined) and the error will  be small. If we choose a larger $\epsilon$, more 
columns are combined and the results are typically equivalent to just simply using
maximum matching ignoring the angle constraint. Thus, in general, 
the performance of the coarsening technique
depends on the intrinsic properties of the matrix under consideration. 
If we have more columns that are close to each other,
i.e., that make a small angle between each other,
the coarsening technique will combine more columns, we can 
choose a smaller $\eps$ and yet 
obtain good results. If the matrix is very sparse 
or if the columns make large angles between each other, 
 coarsening might yield a  coarse matrix that has nearly as many 
columns as the original matrix since 
it will not be able to match many columns.

\subsection{Column Subset Selection}
In the following experiment, we compare the performance of the coarsening 
method against the leverage score
sampling method for column subset selection (CSSP). We report results
for the term-by-document datasets 
(used in  the first set of experiments) and for a few sparse matrices 
from the SuiteSparse matrix collection.

\begin{table*}[tb!]
\caption{CSSP: Coarsening versus leverage score sampling.}
\label{tab:table1}
\begin{center}
{\small
\begin{tabular}{|l|r|c|r|c|c|c|c|c|}
\hline
Dataset&Size&Rank $k$& $c$&\multicolumn{3}{c|}{Coarsening}&\multicolumn{2}{c|}{levSamp}\\\cline{5-9}
&&&&levels&error&{time}&error&{time}\\
\hline
TIME&425 &25&107&2&411.71&6.4s&412.77&0.52s\\
&&50&107&2&371.35&6.4s&372.66&1.08s\\
&&50&54&3&389.69&6.8s&391.91&1.08s\\
\hline
MED&1033
&50&65&4&384.91&5.3s&376.23&1.1s\\
&&100&130&3&341.51&5.3s&339.01&2.2s\\
\hline
Kohonen&4470&25&981&3&31.89&2.4s&36.36&1.4s\\
Erdos992&6100&50&924&3&100.9&0.85s&99.29&3.21s\\
FA&10617&50&2051&3&26.33&2.53s&28.37&3.72s\\
chipcool0&20082&100&1405&4&6.05& 7.02s&6.14&35.69s\\
{scfxm1-2b} & 33047 & 100& 2066&4&2195.4& 4.67s&2149.4&14.46s\\
{thermomechTC} & 102158& 100&  6385&4&2.03& 12.7s&2.01&166.1s\\
{ASIC-680 (n)}& 682712& 100&  42670&4&304.9& 57.3s&322.1&179.8s\\
\hline
\end{tabular}
}
\end{center}
\end{table*}

{Table~\ref{tab:table1} presents the performances and computational times of the two methods.}
The errors reported are the
Frobenius norm errors $\|A-P_CA\|_F$, where
$P_C$ is the projector onto $span(C)$,
and $C$ is the coarsened/sampled matrix
which is computed by the multilevel coarsening method 
or using  leverage score  sampling of  $A$ with  the top  $k$ singular
vectors as  reported in  the second  column. Note  that $P_C=H_kH_k^T$
from Theorem~\ref{theo:1}, with  $k=c$.  {For larger matrices, we computed the projector as
 $P_C=H_{2k}H_{2k}^T$ since it was not practical to computed the exact projector for such large matrices (even after coarsening).} 
The number of  columns $c$ in
each test is reported  in the third column which is  the same for both
methods. Recall that for CSSP,  the coarsening and sampling algorithms
do not  perform a post-scaling of  the columns that are  selected. 
The multilevel coarsening  method performs quite well, yielding results
that are comparable with those of 
leverage  score sampling. 
{Recall that the 
standard leverage score  sampling requires the computation  of the $k$
top singular vectors which  is inexpensive for small sparse matrices, but can be substantially more expensive than
coarsening for large sparse matrices, especially when $k$ is large.
Hence, we note that for the large matrices, coarsening is less expensive than leverage score sampling.
Again, the coarsening method easily scales to even larger matrices. However, computing the errors (and leverage scores)
become inviable.}

\subsection{Graph Sparsification}
The  next  experiment  illustrates  how coarsening  can  be  used  for
(spectral) graph sparsification.  We  again compare the performance of
the   coarsening    approach   to   the   leverage    score   sampling
method~\cite{kapralov2014single}  for  graph spectral  sparsification.
Recall  that spectral  sparsification  amounts to  computing a  sparse
graph $\tilde  G$ that approximates  the original graph $G$  such that
the singular  values of the graph  Laplacian $\tilde K$ of  $\tilde G$
are close to those of $K$, Laplacian of $G$.

\begin{table*}[tb!]
\caption{Graph Sparsification: Coarsening versus leverage score sampling.
Error$=\frac{1}{r}\sum_{i=1}^r\frac{|\sigma_i(\tilde K)-\sigma_i(K)|}{\sigma_i(K)}$}
\label{tab:tabspar}
\begin{center}
{\small
\begin{tabular}{|l|r|r|c|c|c|c|c|c|}
\hline
Dataset&$m$ & $r$&$\frac{nnz(\tilde K)}{nnz(K)}$&\multicolumn{3}{c|}{Coarsening}&\multicolumn{2}{c|}{levSamp}\\\cline{5-9}
&&&&levels&error&{time}&error&{time}\\
\hline
sprand& 1954 &489&0.27&2&0.545&0.84s&0.578&0.23s\\
          & {17670}& 4418&0.257&2&0.283&9.05s&0.371&8.27s\\
 	 & {49478} &12370&0.251&2& 0.102&41.9s&0.243&33.6s\\
	 & {110660} &27665&0.253&2& 0.089&118.9s&0.117&244.3s\\
	  & {110660} &13888  &0.129&3& 0.144&130.7s&0.308&242.1s\\
	   &{ 249041}&31131 &0.128&4& 0.104&426.1s&0.165&731.0s\\
  \hline
G1& 19176&4794&0.265&2&0.151&4.7s&0.221&17.1s\\
{comsol}& 49585&6337&0.281&2&0.157&5.4s&0.154&21.3s\\
{heart2}& 171254&21407&0.126&3&0.265& 121.5s&0.241&126.3s\\
{heart1}& 347853&21335&0.126&3&0.166&409.7s&0.208&515.8s\\
\hline
\end{tabular}
}
\end{center}
\end{table*}

Table~\ref{tab:tabspar} lists the errors obtained {and the computational costs} when the coarsening
and  the leverage  score sampling  approaches were  used to  compute a
sparse graph $\tilde  G$ for different sparse random graphs  and a few
matrices related  to graphs  from the  SuiteSparse collection.   Given a
graph $G$, we  can form a vertex edge incidence  matrix $B$, such that
the graph Laplacian of $G$
 is $K=B^TB$.  Then, sampling/coarsening  the rows of $B$ to
get   $\tilde   B$   gives   us  a   sparse   graph   with   Laplacian
$\tilde{K}=\tilde{B}^T\tilde{B}$. The  type of graph or  the names are
given in the first  column of the table and the number  of rows $m$ in
corresponding vertex edge incidence matrix  $B$ is given in the second
column.  The  number of rows  $r$ in the  coarse matrix $\tilde  B$ is
listed in the third column.  The  ratios of sparsity in $\tilde K$ and
$K$  are also  given.  This  ratio indicates  the  amount of  sparsity
achieved by sampling/coarsening. Since, we have the same number of rows in
the coarsened  and sampled matrix $\tilde  B$, this ratio will  be the
same  for both  methods.  The  error reported  is the  normalized mean
absolute  error  in  the  singular  values  of  $K$  and  $\tilde  K$,
Error$=\frac{1}{r}\sum_{i=1}^r\frac{|\sigma_i(\tilde
  K)-\sigma_i(K)|}{\sigma_i(K)}$, 
which tells us how close the sparser
matrix  $\tilde K$  is to  $K$ spectrally  (related to  the result  in
Lemma~\ref{lemm:RQ}).   {We  see that  in  most cases, the  coarsening
approach yields a smaller error  than with leverage score sampling.
This is particularly true when the input graph is relatively dense and its pattern has certain structure}.

\subsection{Applications} In this section, we illustrate the performance of the coarsening 
technique in the various applications introduced in the previous section~\ref{sec:appl}.

\begin{figure}[tb]
\begin{center}

\includegraphics[height=0.3\textwidth]{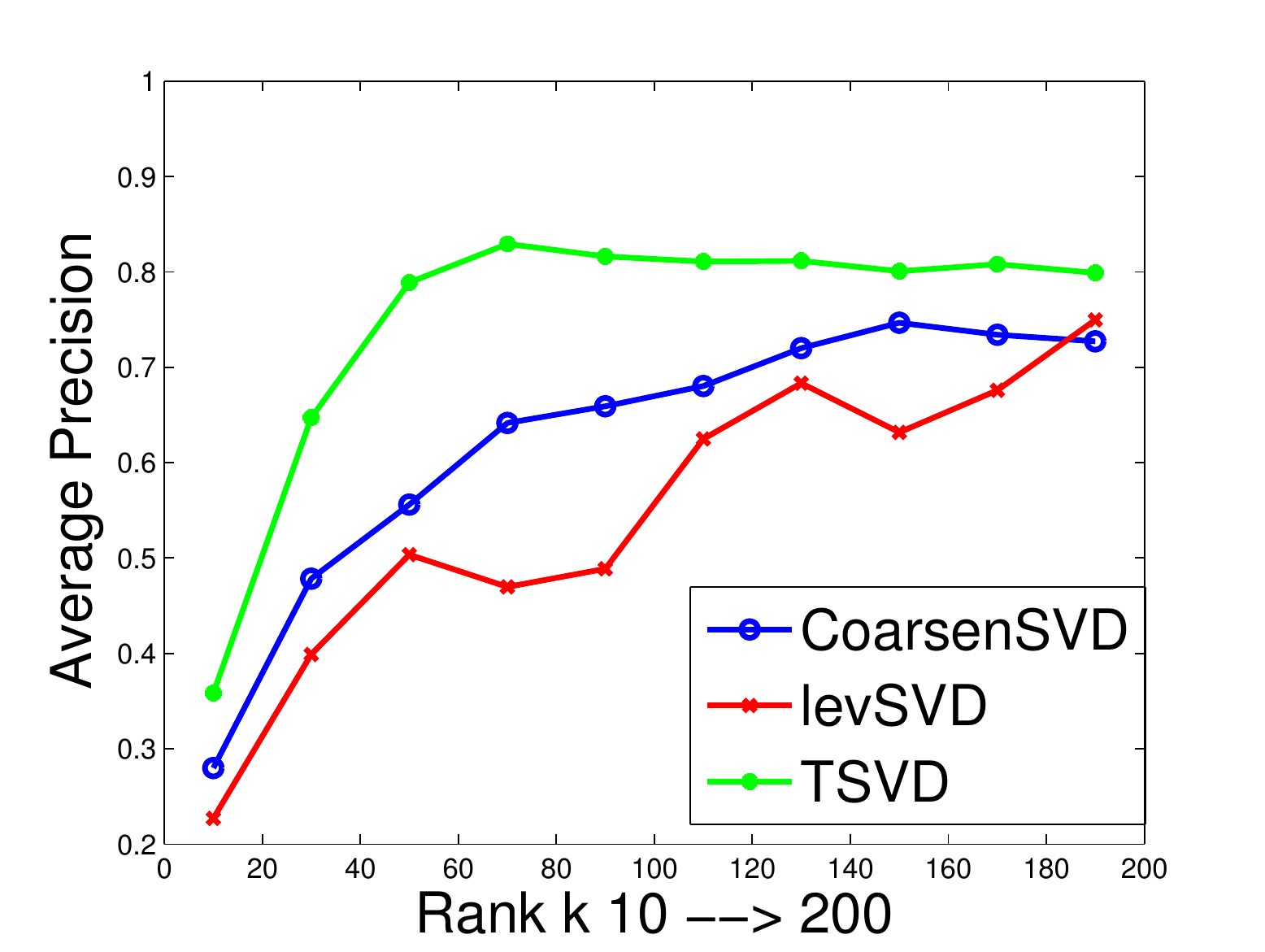}
\includegraphics[height=0.3\textwidth]{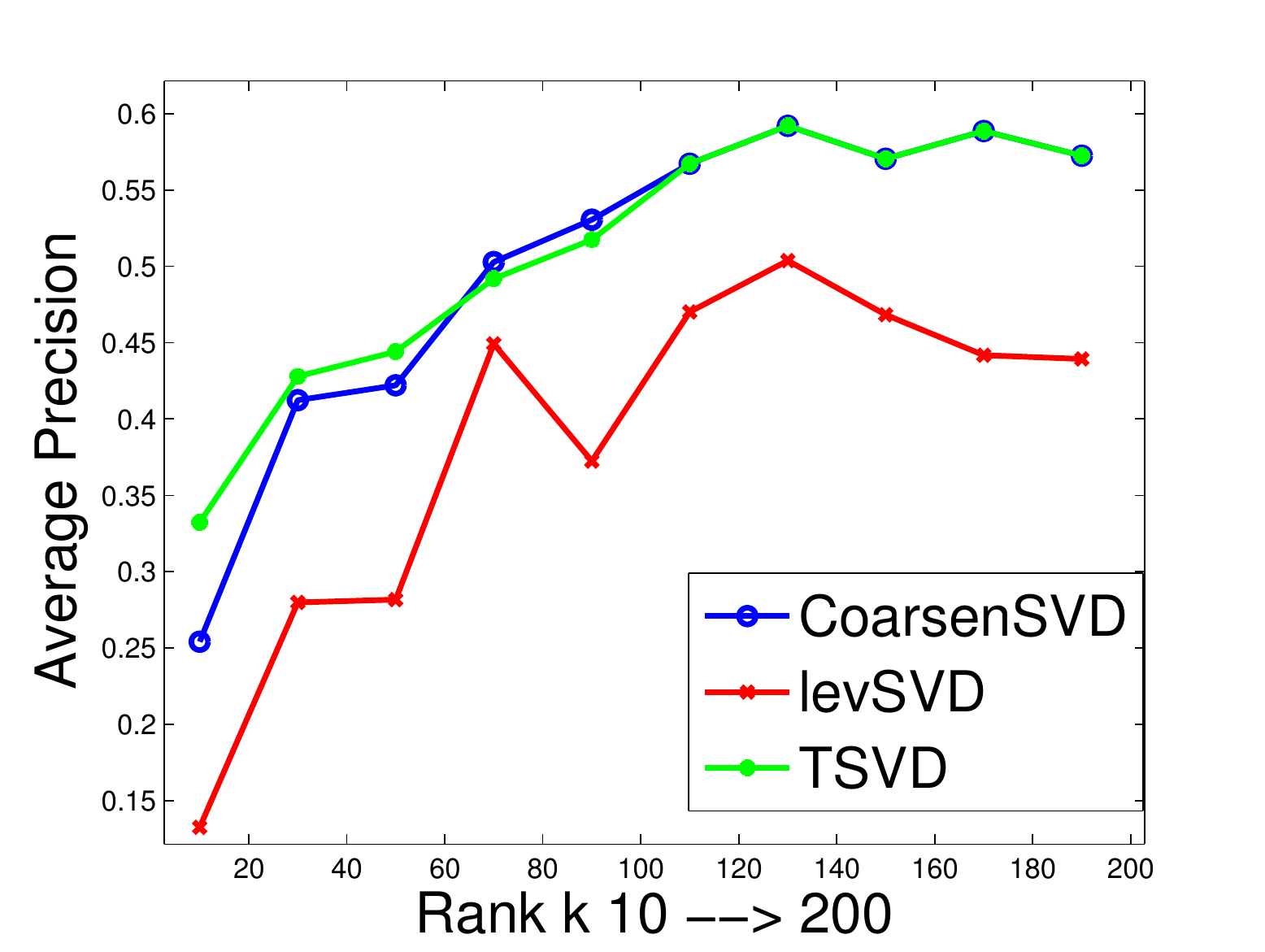} 
\vskip -0.1in
\caption{LSI results for the MEDLINE dataset on left and TIME dataset on the right.} 
\label{fig:LSI_1}
\end{center}
\end{figure}

\subsubsection{Latent Semantic Indexing}
The first application we 
consider  is  Latent Semantic Indexing 
(LSI)~\cite{deerwester1990indexing,landauer1998introduction}.
In LSI, we have a term-document matrix
$A\in \RR^{m\times n}$,
representing 
$m$ documents and $n$ terms that  frequently occur in the documents,
where $A_{ij}$ is the frequency of the $j$-th term in the $i$-th document.  
A query is an $n$-vector $q\in\RR^n$, normalized to $1$, where the
$j$-th component of a query vector is interpreted as the frequency with which the
$j$-th term occurs in a  topic. Typically, the number of topics 
to which the documents are related 
is 
smaller than
 the number of unique terms $n$.  Hence, 
finding a set of $k$ topics that best describe the collection of  documents for
a given $k$, corresponds 
to keeping only the top $k$ singular vectors of $A$, and obtaining a rank $k$ approximation. 
 The  truncated SVD and related methods
are often
 used  in LSI applications. The argument is that a low rank approximation
captures the important underlying intrinsic semantic 
 associated with terms and documents, and removes
the noise or variability in word usage~\cite{landauer1998introduction}.
In this experiment, we employ the Coarsen SVD and leverage score sampling SVD algorithms  
to perform  information retrieval techniques by Latent Semantic Indexing (LSI)~\cite{hss:mlevel-08}. 

Given a term-by-document data $A\in\RR^{m\times n}$, we normalize the data 
using TF-IDF (term frequency-inverse document frequency) scaling.
We also normalize the columns to unit vectors.
Query matching is the process of finding the documents most
relevant to a given query $q\in\RR^m$.

Figure~\ref{fig:LSI_1} plots the average precision against the dimension/rank $k$ for 
MEDLINE  and TIME datasets.
When the term-document matrix $A$
is large, the computation of the SVD factorization can be expensive for large
ranks $k$. The multi-level 
techniques  will find a smaller set of document vectors, denoted by$
A_r\in\RR^{m\times n_r}$, to represent $A$ ($n_r< n$). For leverage score sampling, we sample $A_r$
using leverage scores with $k$ equal to the rank shown on the $x$  axis.
Just like in  the standard LSI, we compute the truncated SVD of
$A_r=U_d\Sigma_dV_d^T$, where $d$ is the rank (dimension) chosen. Now the
reduced representation of $A$ is $\hat{A}=\Sigma_d^{-1}U_d^TA$.
Each query $q$ is transformed to a reduced representation  $\hat{q}=\Sigma_d^{-1}U_d^Tq$. 
The similarity of $q$ and $a_i$
are measured by the cosine distance between $\hat{q}$ and $\hat{a}$
for $i=1,\ldots,n$. 
This example clearly illustrates the advantage of the coarsening method over randomized sampling and
leverage scores. The multilevel coarsening method performs  better than the
sampling method in this application and in some cases it performs as well as the truncated SVD method.
 Multilevel coarsening algorithms  for LSI applications, have been discussed in~\cite{hss:mlevel-08} 
 where additional details can be found.

   \begin{figure}[tb]
\includegraphics[height=0.27\textwidth]{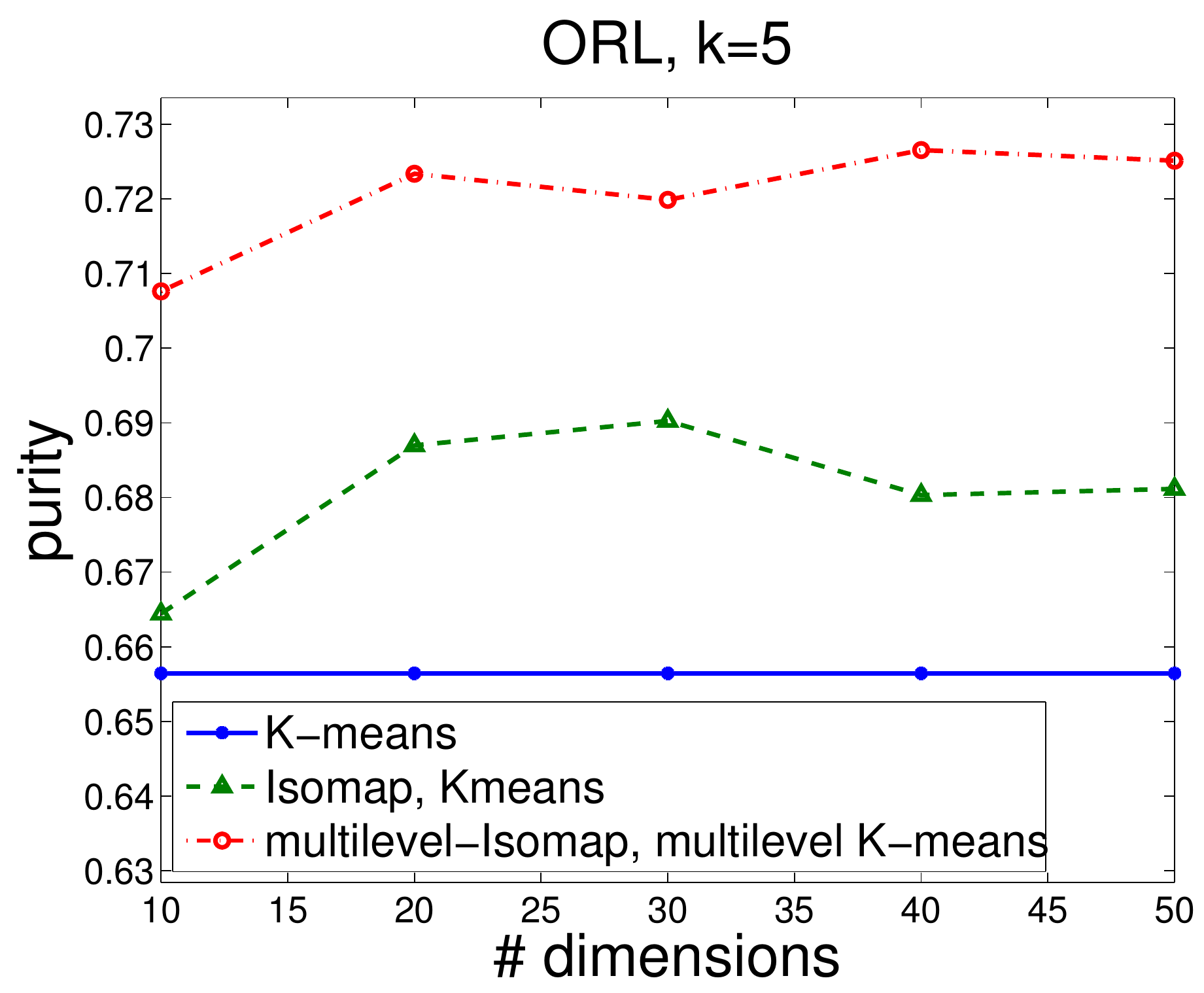}
\includegraphics[height=0.27\textwidth]{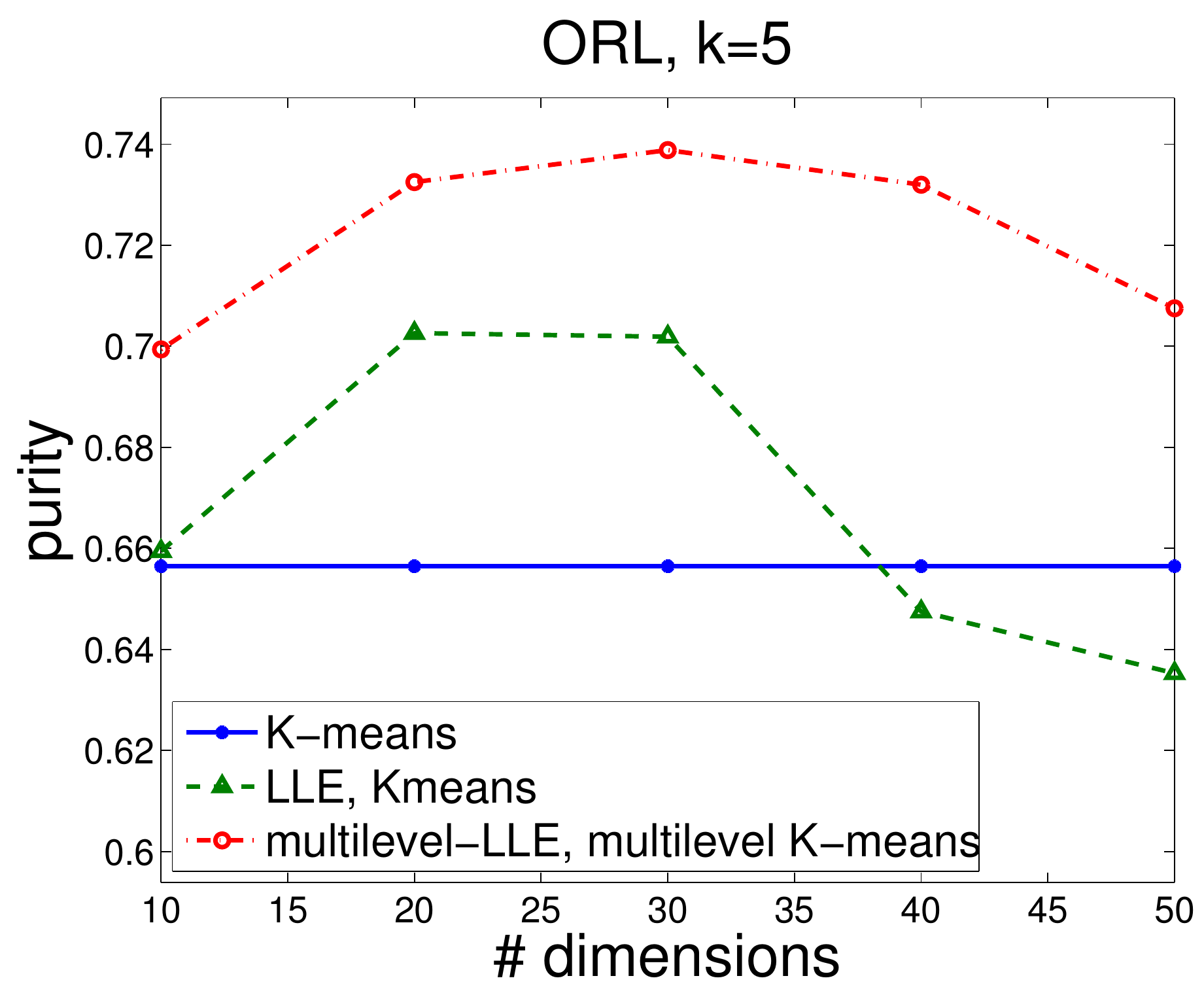} 
\includegraphics[height=0.27\textwidth]{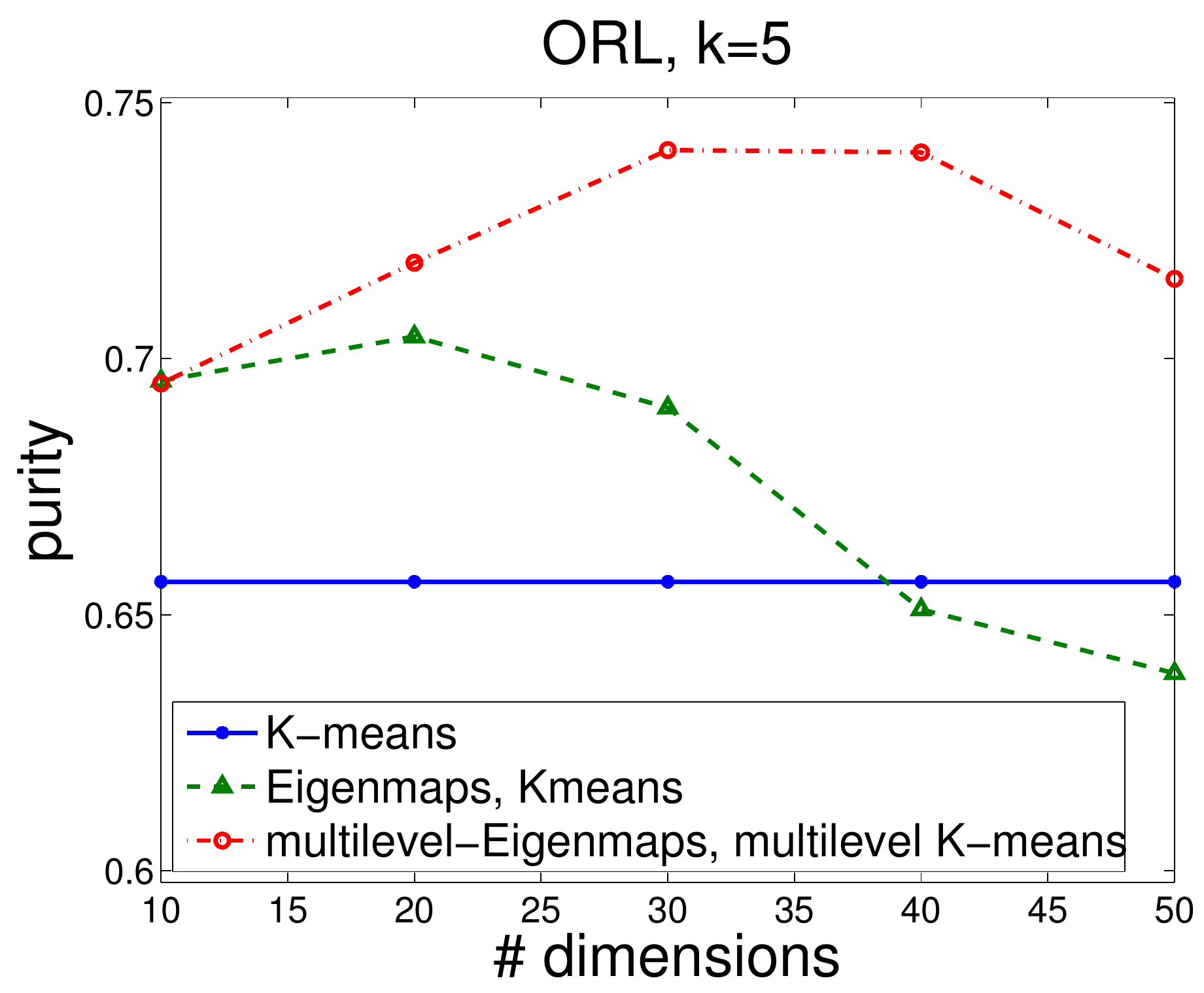} 
\includegraphics[height=0.27\textwidth]{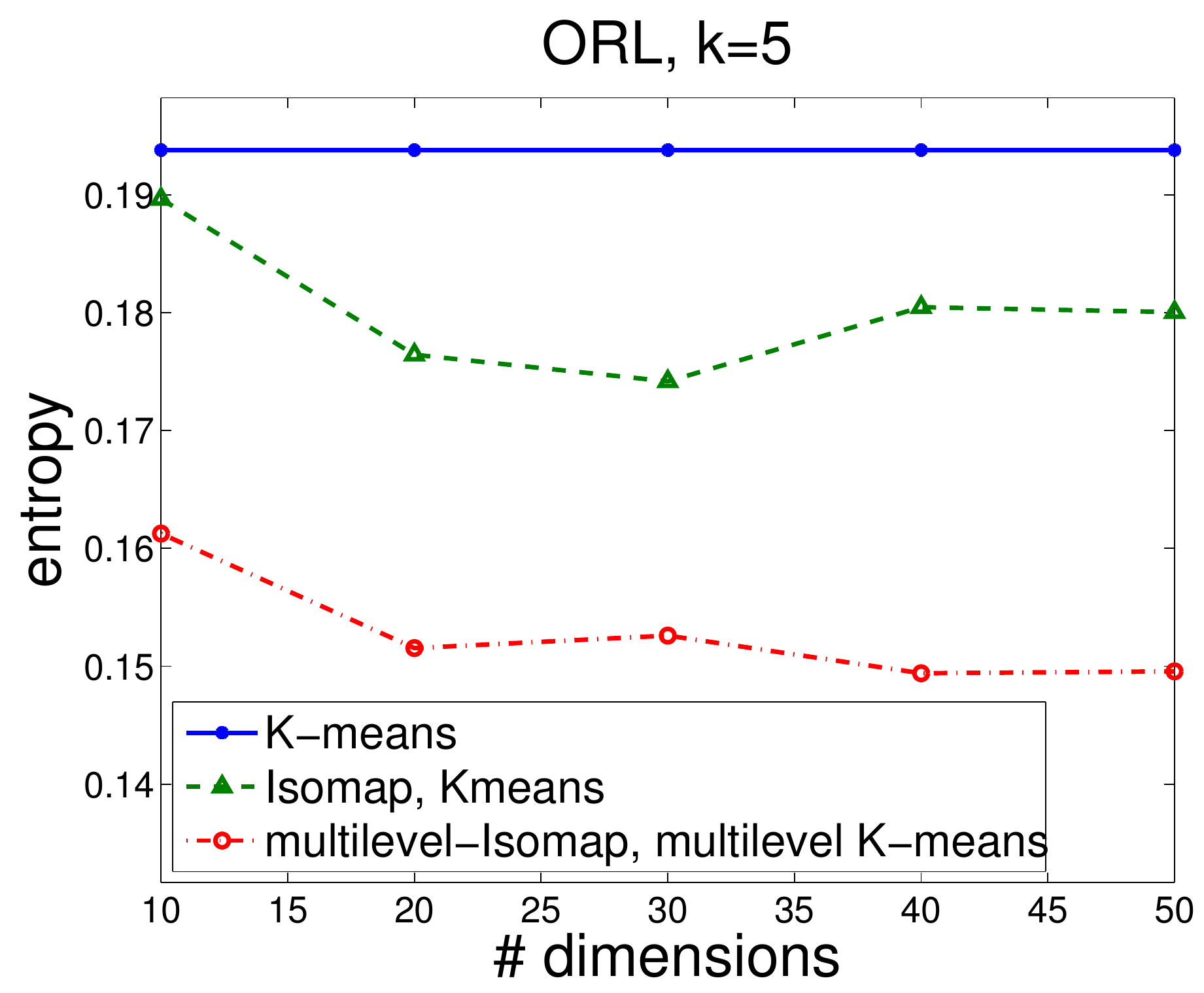}
\includegraphics[height=0.27\textwidth]{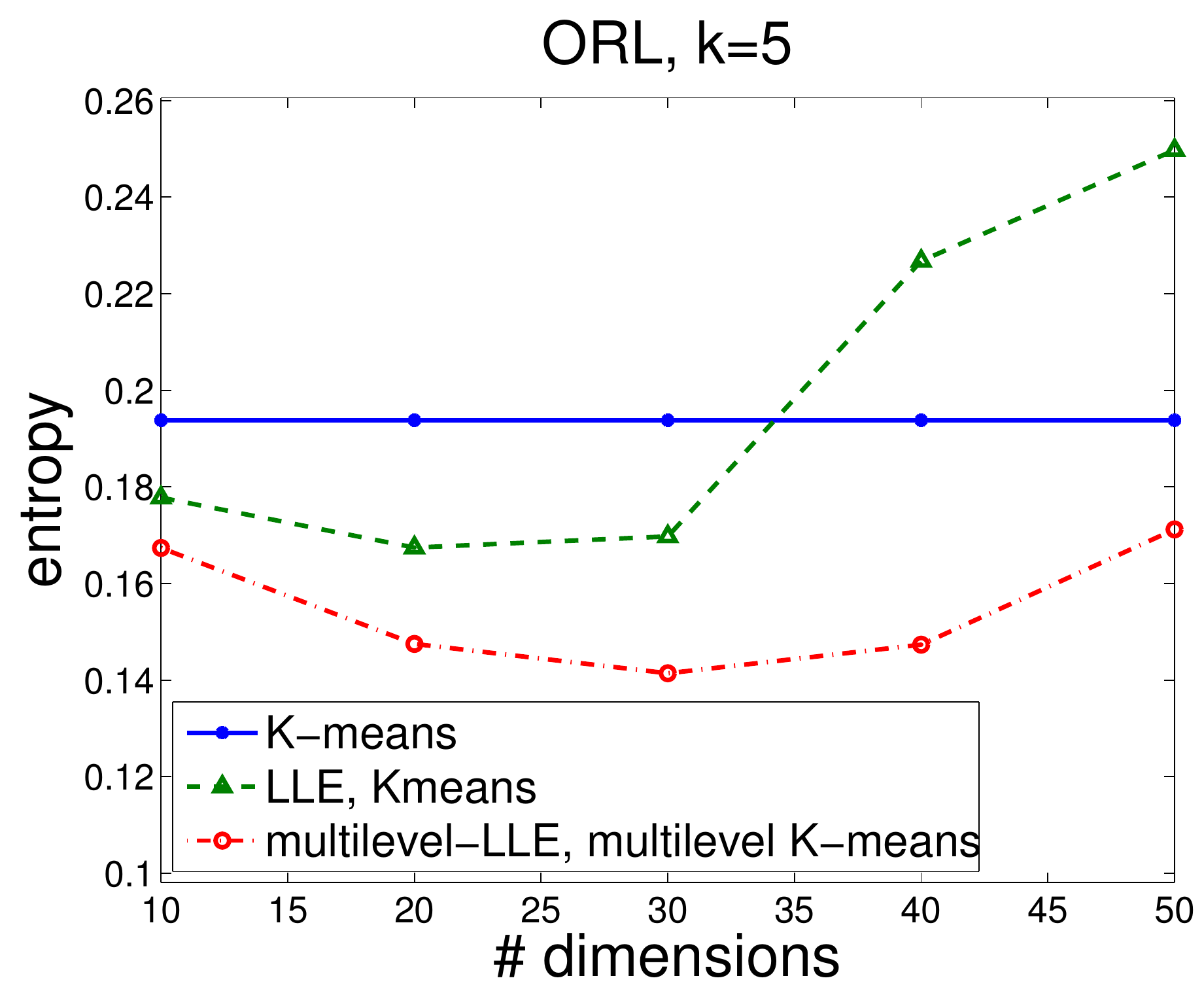} 
\includegraphics[height=0.27\textwidth]{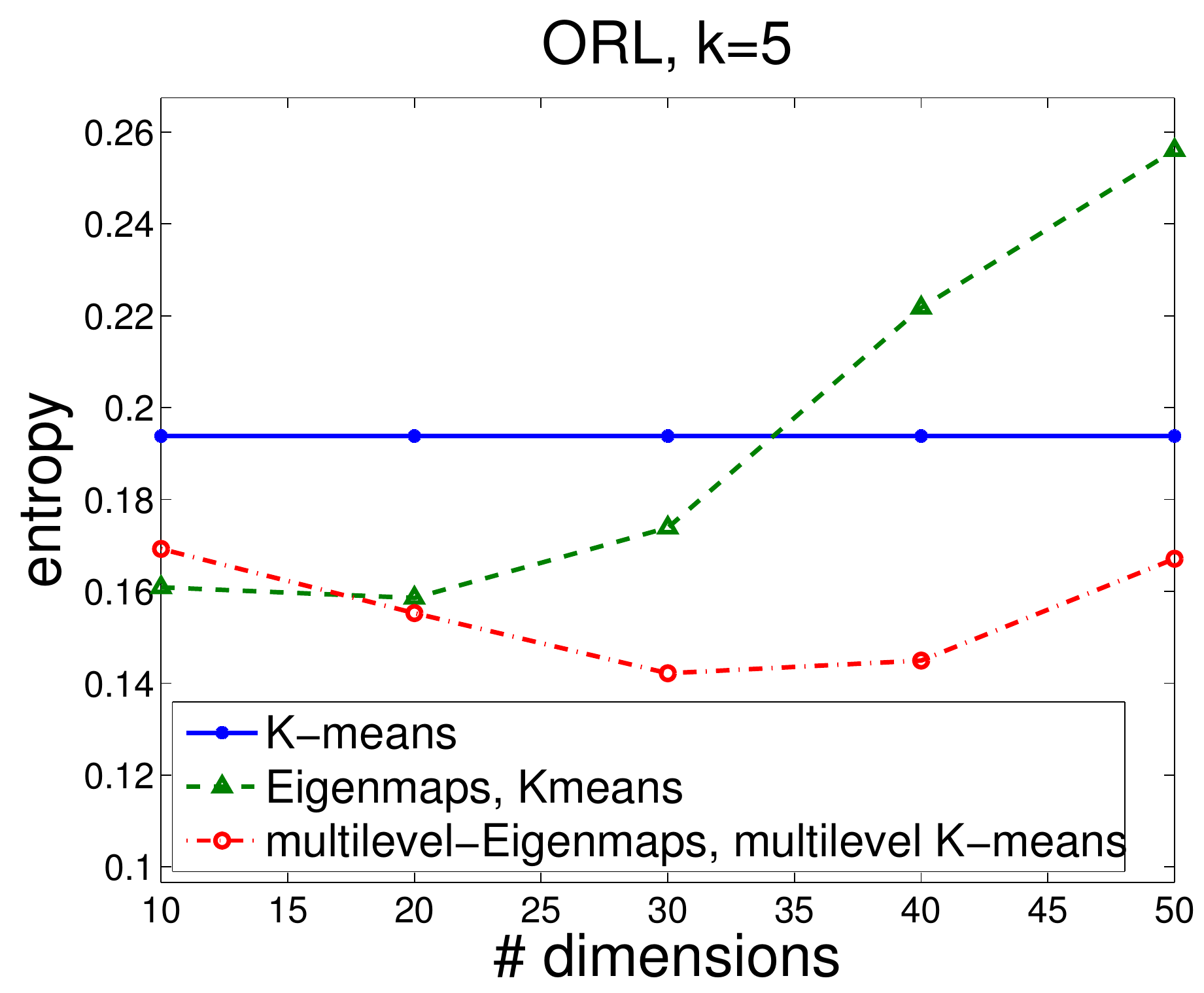} 
\vskip -0.1in
\caption{Purity and entropy values versus  dimensions for three types of clustering for 
ORL dataset.} 
\label{fig:clustering_1}
\end{figure}
\subsubsection{Projective clustering}
The next application we consider is a set of nonlinear  projection based clustering
techniques. We illustrate how the multilevel coarsening methods can be used for data reduction
in this application.
We consider three types of nonlinear projection methods, namely, Isomap~\cite{tenenbaum2000global}, 
Local Linear Embedding (LLE)~\cite{roweis2000nonlinear}
and Laplacian Eigenmaps~\cite{belkin2003laplacian}.
Multilevel algorithm have been used in the clustering application, for example, 
article~\cite{oliveira2005multi} uses a multilevel algorithm, based on  MinMaxCut,
for  document clustering, and Fang et. al.~\cite{fang2010multilevel} applied the 
mutlilevel algorithms for spectral clustering and manifold learning.

Given $n$ data-points, most of the projective clustering  methods start by constructing 
a graph with edges defined based on certain criteria such as
 new distance metrics or  manifolds,
nearest neighbors, points on a same subspace, etc. 
The graph Laplacian corresponding to the graph is considered, and for a given $k$, 
 the top $k$ eigenvectors 
 of a shifted Laplacian matrix,
 whose top eigenvectors correspond
 to the bottom eigenvectors of the original graph,
 are used to  cluster the points. 
 We use the following two evaluation metrics to analyze the  quality of the clusters obtained, namely 
\emph{purity} and \emph{entropy}~\cite{zhao2004empirical} given by:
\begin{eqnarray*}
 \purity = \sum_{i=1}^K\frac{n_i}{n}\purity(i);
 &\purity(i)=\frac{1}{n_i}\max_j(n^j_i),&
\text{and}\\ 
 \entropy = \sum_{i=1}^K\frac{n_i}{n}\entropy(i); &
 \:\entropy(i)=-\sum_{j=1}^K\frac{n^j_i}{n_i}\log_K\frac{n^j_i}{n_i},&
\end{eqnarray*}
where $K$ is the number of clusters, $n^j_i$ is the number of entries of class $j$ in cluster $i$,
and $n_i$ is the number of data in cluster $i$. Here, we  assume that the labels 
indicating the class 
to which data belong
 are available. {The statistical meaning of these metrics can be found in~\cite{zhao2004empirical}, along with empirical and theoretical comparisons of these metrics with other standard metrics.}

In figure~\ref{fig:clustering_1} we present results for three types of projective clustering 
methods, viz., Isomap, LLE and eigenmaps when coarsening was used before dimensionality
reduction. The dataset used is the popular ORL face dataset~\cite{samaria1994parameterisation},
which contains $40$ subjects and $10$ grayscale images each of size $112\times92$ with 
various facial expressions (matrix size is $10304\times 400$).
For the projective methods, we first construct a $k$-nearest neighbor graph with $k=5$,
and use embedding dimensions $p=10,\ldots,50$. Note that even though the data is dense, the 
kNN graph is sparse. The figure presents the purity and entropy values
obtained for the three projective clustering 
methods for these different dimensions $p$ with (circle) and without (triangle) coarsening the graph.
The solid lines indicate the results when kmeans was directly used on the data without dimensionality
reduction. 
We see that the projective methods give improved
 clustering quality in terms of both purity and entropy,
and coarsening further improves their results in many cases by reducing redundancy.
This method was also discussed 
in~\cite{fang2009multilevel} where additional results and illustrations with other
applications can be found.
{We refer to~\cite{fang2009multilevel,fang2010multilevel} for additional details of this application and methods.}

\subsubsection{Genomics - Tagging SNPs}
The third application we consider is that of DNA microarray gene analysis.
The data from microarray experiments is represented
as a matrix $A\in \RR^{m\times n}$, where $A_{ij}$ indicates whether the $j$-th  expression level
exists for gene $i$. Typically, the matrix could have entries $\{-1,0,1\}$ indicating whether the
expression exists ($\pm1$) or not (0) and the sign indicating the order of the sequence, see supplementary material of~\cite{paschou2007intra} for details on 
this encoding. 
Article~\cite{paschou2007intra} used CSSP with a  greedy selection algorithm 
to select a subset of gene expressions or single nucleotide polymorphisms (SNPs)
from a table of SNPs for different populations that capture the spectral information (variations)
of population. The subset of SNPs are called \emph{tagging SNPs} (tSNPs).
Here we show how the coarsening method can be applied in this application to select  columns
(and thus tSNPs) from the table of SNPs, which characterize the extent
to which major patterns of variation of the intrapopulation data
are  captured  by  a  small  number  of  tSNPs.

\begin{table*}[tb!]
\caption{TaggingSNP: Coarsening, Leverage Score sampling and Greedy selection}
\label{tab:tablegen}
{\small
\begin{center}
\begin{tabular}{|l|c|c|c|c|c|}
\hline
Data& Size&$c$ &Coarsen & Lev. Samp. &Greedy \\
\hline
Yaledataset/SORCS3& $1966\times53$ & 14 & 0.0893& 0.1057  & 0.0494\\
Yaledataset/PAH& $1979\times32$ & 9 & 0.1210& 0.2210  & 0.0966\\
Yaledataset/HOXB& $1953\times96$ & 24 & 0.1083& 0.1624 & 0.0595\\
Yaledataset/17q25& $1962\times63$ & 16 & 0.2239& 0.2544 & 0.1595\\
\hline
HapMap/SORCS3& $268\times307$ & 39 & 0.0325& 0.0447 &0.0104\\
HapMap/PAH& $266\times88$ & 22 & 0.0643&0.0777  &  0.0311\\
HapMap/HOXB& $269\times571$ & 72 & 0.0258& 0.0428  &0.0111\\
HapMap/17q25& $265\times370$ & 47 & 0.0821&  0.1190  & 0.0533\\
\hline
\end{tabular}
\end{center}
}
\end{table*}

We use the same two datasets as in~\cite{paschou2007intra}, namely the Yale dataset and the Hapmap datset.
The Yale dataset\footnote{\url{http://alfred.med.yale.edu/}}~\cite{osier2001alfred} contains 
a total of 248 SNPs for around 2000 unrelated individuals from 38
populations from around the world.
 We consider four  genomic regions
(\emph{SORCS3,PAH,HOXB,}  and  \emph{17q25}).
The HapMap project\footnote{\url{https://www.ncbi.nlm.nih.gov/variation/news/NCBI_retiring_HapMap/}}~\cite{gibbs2003international}
(phase I)  released a public database of 1,000,000 SNP typed in different
populations. From this database, we consider the data for the same four regions.
Using the  SNP table, an  encoding matrix  $A$ is formed  with entries
$\{-1, 0, 1\}$, with the same  meaning of the three possible values as
discussed above.
We obtained these encoded matrices, made available online by the authors 
of~\cite{paschou2007intra}, from~\url{http://www.asifj.org/}. 

Table~\ref{tab:tablegen}  lists the  errors  obtained  from the  three
different  methods, namely,  Coarsening, Leverage  Score sampling  and
Greedy      selection~\cite{paschou2007intra}       for      different
populations. 
If  $nnz(X)$  is the  number of  nonzero elements  in a matrix $X$,
the error that is reported is given by  $nnz(\hat A- A)/nnz(A)$,
where $A$ is  the input encoding matrix, 
 $\hat{A}=CC^\dag  A$, is  the projection of  $A$ onto  
the sampled/coarsened $C$.
  The greedy  algorithm
considers  each  column  of  the  matrix  sequentially,  projects  the
remaining columns  onto the considered  column and chooses  the column
that gives least  error as defined above.  The  algorithm then repeats
the procedure to  select the next column and so  on. This algorithm is
very expensive  but it performs  rather well in practice. Observe
that  the coarsening  algorithm  performs better  than leverage  score
sampling  and the  performance is  comparable with that  of the  greedy
algorithm  in some  cases. The coarsening  approach is  less expensive
than leverage score sampling  which in turn is much less expensive  than
the greedy algorithm.

\subsubsection{Multilabel Classification}

The last application we consider  is that of multilabel classification
(MLC). As seen in section~\ref{sec:appl},  the most common approach to
handle large  number of labels in  this problem is to  perform a label
dimension  reduction assuming  a low  rank property  of labels,  i.e.,
assuming  that only a few labels  are important.  In this  section, we
propose   to  reduce   the   label  dimension   based  on   hypergraph
coarsening. Article~\cite{bi2013efficient} presented  a method for MLC
based on  CSSP using leverage score  sampling. The idea is  to replace
sampling by hypergraph coarsening in this method.
    
    \begin{table*}[tb!]
\caption{Multilabel Classification using CSSP (leverage score sampling) and coarsening: 
Average training and test errors and $Precison@k$, $k=$sparsity.}
\label{tab:tabmlc}
\begin{center}
{\small
\begin{tabular}{|l|c|c|c|c|c|c|}
\hline
 Data  & Method & $c$& Train Err & Train P@k & Test Err &Test P@k\\
 \hline

 \hline
  \multirow{2}{12em}{Mediamill,  $d=101, 
  n=10000, nt=2001,
 p=120$.} 
  &Coars	&51&{\bf10.487}	&0.766		&{\bf8.707}		&{\bf0.713}\\
 &CSSP		&51 &10.520	&{\bf0.782}		&12.17		&0.377\\

 \hline
  \multirow{2}{12em}{Bibtex,  $d=159, 
  n=6000, nt=1501,
 p=1836$.} 
  &Coars	&80&{\bf1.440}	&{\bf0.705}		&4.533		&{\bf0.383}\\
 &CSSP		&80 &1.575	&0.618		&{\bf4.293}		&0.380\\
 
 \hline
  \multirow{2}{12em}{Delicious,  $d=983, 
  n=5000, nt=1000,
 p=500$.} 
  &Coars	&246&{\bf50.943}	&0.639		&{\bf74.852}		&0.455\\
 &CSSP		&246 &53.222	&{\bf0.655}		&77.937		&{\bf0.468}\\
 \hline
  \multirow{2}{12em}{Eurlex,  $d=3993, 
  n=5000, nt=1000,
 p=5000$.} 
  &Coars	&500&2.554	&{\bf0.591}		&{\bf73.577}		&0.3485\\
 &CSSP		&500 &{\bf2.246}	&0.504		&81.989		&{\bf0.370}\\
  \hline
\end{tabular}
}
\end{center}
\end{table*}


Table~\ref{tab:tabmlc}  lists  the  results   obtained  for  MLC  when
coarsening  and leverage  score sampling  (CSSP) were  used for  label
reduction  in  the  algorithm of~\cite{bi2013efficient}  on  different
popular  multilabel   datasets.  All   datasets  were   obtained  from
\url{https://manikvarma.github.io/downloads/XC/XMLRepository.html}.
The gist  of the ML-CSSP  algorithm is as  follows: Given data  with a
large   number   of   labels   $Y\in\mathbb{B}^{n\times   d}$,   where
$\mathbb{B}$ is a  binary field with entries $\{0,1\}$,  the
label dimension is reduced 
by  subsampling or coarsening the  label matrix leading to
$c<d$ labels.  The next step is to  train  $c$ binary
classifiers for  these reduced $c$ labels.   For a new data  point, we
can predict whether  the data-point belongs to the  $c$ reduced labels
using  the  $c$  binary  classifiers, by  getting  a  $c$  dimensional
predicted label vector.  We then project the predicted vector onto $d$
dimension  and then  use rounding  to  get the  final $d$  dimensional
predicted vector.
     
All prediction  errors reported (training  and test) are  Hamming loss
errors,  i.e., the  number of  classes for  which the  predicted label
vector differs from the exact label  vector. The second metric used is
$Precison@k$,   which    is   a  common   metric   used    in the  MLC
literature~\cite{ubaru17a}. It measures  the precision of predicting
the first $k$ coordinates $|supp(\hat{y}_{1:k})\cap supp(y)|/k$, where
$supp(x)=\{i|x_i\neq0\}$.   In the  above  results,  we chose  $k=$the
actual sparsity of the predicted  label vector.  This is equivalent to
checking whether or  not the proposed method predicted  all the labels
the  data  belongs  to  correctly.    Other  values  of  $k$  such  as
Precision@$k$ for  $k=1,3,5$ are used,  where one is  checking whether
the top 1,3 or 5 labels respectively are predicted correctly, ignoring
other and false labels. The better  of the two results is highlighted.
In this  application too, we  see that the coarsening  method performs
well, outperforming  the more costly CSSP method in a few cases.

\section{Conclusion}
This paper advocated the use of coarsening techniques for a number of
matrix approximation problems, including the computation of 
a partial SVD, the column  subset selection problem,
and graph sparsification. 
We  illustrated how coarsening methods,
and a combination of sampling and coarsening, can be applied to
solve these  problems and also presented  a few (new) applications  for 
coarsening technique. 

The experiments showed  that the techniques based on coarsening 
perform quite well in  practice, better than
the randomized methods in many cases. 
Coarsening is  also inexpensive  compared to the sampling methods for large sparse matrices.   
While coarsening has traditionally been exploited in 
a completely different context to devise multilevel schemes for 
sparse systems as well as for graph partitioning, it appears that
the same principles offer a tremendous potential for solving 
problems related to data.

{\small
\bibliographystyle{abbrv}
\bibliography{mlsi}
}
\end{document}